\theoremstyle{theorem}
\newtheorem{theorem}{Theorem}
\newtheorem{lemma}{Lemma}
\newtheorem{proposition}{Proposition}
\newtheorem{corollary}{Corollary}
\theoremstyle{definition}
\newtheorem*{definition}{Definition}
\newtheorem*{remark}{Remark}
\newtheorem*{problem}{Problem}
\newtheorem*{example}{Example}
\def\Q{\mathbb{Q}}
\def\O{\mathcal{O}}
\def\Z{\mathbb{Z}}
\def\R{\mathbb{R}}
\def\C{\mathbb{C}}
\def\T{T}
\def\Ball{B_{1/k}(\alpha_{i,j}^k)}
\DeclareMathOperator\im{Im}
\DeclareMathOperator\denom{denom}
\DeclareMathOperator\rk{rank}
\DeclareMathOperator\Cl{Cl}
\begin{document}

\title{Euclidean rings of $S$-integers in complex quadratic fields}
\author{Kyle Hammer; Kevin McGown; Skip Moses}
\markright{Euclidean rings of $S$-integers}

\maketitle

\begin{abstract}
We give an elementary approach to studying whether rings of $S$-integers in complex quadratic fields
are Euclidean with respect to the $S$-norm.
\end{abstract}

%%%%%%%%%%%%%%%%%%%%%%%%%%%%%%%%%%%%%%%%%%%%%%%%%%%%%%%%%%%%%%%

\section{Introduction}\label{S:intro}

In a first course on elementary number theory, one 
establishes the Fundamental Theorem of Arithmetic,
which says that every positive integer factors uniquely as product of primes.
In the standard proof, one needs to know that for any prime $p$,
if $p$ divides $ab$, then $p$ divides $a$ or $p$ divides $b$.
This seemingly obvious fact about primes is usually derived from the Euclidean algorithm.
%In the words of Hardy and Wright:  ``Euclid knew very well that the theory of numbers turned upon his algorithm.''
The following crucial property of $\Z$ allows one to show that the Euclidean algorithm
terminates in a finite number of steps:  For all $a,b\in\Z$, $b\neq 0$, there exists $q,r\in\Z$
such that $a=qb+r$ and $|r|<|b|$.  Equivalently, this ``Euclidean property'' reads: 
For all $x\in\Q$ there exists $c\in\Z$ such that $|x-c|<1$.

Algebraic number fields are one of the main objects of study in algebraic number theory.
The simplest number fields other than the rational numbers~$\Q$ are the complex
quadratic fields which take the form
$$K=\Q(\sqrt{-d})=\{x+y\sqrt{-d}\mid x,y\in\Q\}\,,$$
where $d>0$ is squarefree.
We view $K$ as a subset of the complex numbers $\C$.
Let $N:K\to\Q$ denote the norm function (which multiplies an element by its conjugate)
$$N(x+y\sqrt{-d})=(x+y\sqrt{-d})(x-y\sqrt{-d})=x^2+dy^2.$$
One has $N(\xi\eta)=N(\xi)N(\eta)$ for all $\xi,\eta\in K$,
as is verified by direct computation.
Let $\O$ denote the ring of integers in $K$, which consists of those elements that satisfy a monic polynomial 
with integer coefficients.  One can show $\O=\Z[w]=\{a+bw\mid a,b\in\Z\}$ with
$$
  w=\begin{cases}
  \sqrt{-d} & -d\equiv 2,3\pmod{4}\\[1ex]
  \frac{1+\sqrt{-d}}{2} & -d\equiv 1\pmod{4}
  \,.
  \end{cases}
$$
The ring $\O$ in $K$ is the analogue of $\Z$ in $\Q$, but properties enjoyed by $\Z$
(such as unique factorization) do not always hold in $\O$.

We now give the classical generalization of the Euclidean algorithm.
We call $K$ norm-Euclidean if for every $\xi\in K$ there exists $\gamma\in\O$ such that
$N(\xi-\gamma)<1$; this is equivalent to the condition that $\O$ is a Euclidean ring with respect to
the function $N$.
It is known that $K$ is norm-Euclidean exactly when $d=-1,-2,-3,-7,-11$.
This goes back to Dedekind's supplement to Dirichlet's book~\cite{DD}.

Let $S\subseteq\Z^+$ be a finite (possibly empty) set of primes,
and let $T\subseteq\Z^+$ be the set of all finite products of elements in $S$
(where $1\in T$ by convention).
In this paper, we are interested in the so-called ring of $S$-integers
$$
  \O_{S}=\left\{\frac{a+bw}{c}: a,b,\in\Z, c\in T\right\}\subseteq K\,.
$$
This is an example of the ring-of-fractions construction one might encounter in a first course on ring theory.

We define the $S$-norm of an element $\xi\in K$, denoted by $N_S(\xi)$, by deleting the primes in $S$ from the prime factorization of
the numerator and denominator of the rational number $N(\xi)$.
For example, when $K=\Q(\sqrt{-5})$, $S=\{2\}$, $\xi=(3+3\sqrt{-5})/7$ one has
$N(\xi)=54/49$ and hence $N_S(\xi)=27/49$.
It follows from the multiplicative property of $N$ that the function $N_S$ is also multiplicative.

We call $K$ $S$-norm-Euclidean if 
for every $\xi\in K$ there exists $\gamma\in\O_S$ such that $N_S(\xi-\gamma)<1$;
this is equivalent to the statement that $\O_S$ is a Euclidean ring with respect to the function $N_S$.
One would like to know: When is $K$ $S$-norm-Euclidean?
Even in our setting, where $K$ is a complex quadratic field,
this question is, in general, unsolved.
To give a simple example, if $K=\Q(\sqrt{-5})$ and $S=\{2\}$,
then $\O_S=\Z\left[\sqrt{-5},\frac{1}{2}\right]$; in this case $K$ is $S$-norm-Euclidean,
although $K$ is not norm-Euclidean.

In this paper we give an elementary approach that allows us to
prove a few results about $S$-norm-Euclidean fields in the complex quadratic setting.
Our first result furnishes a number of examples.

\begin{theorem}\label{T:1}
The complex quadratic field $K=\Q(\sqrt{-d})$ is $S$-norm-Euclidean
for the following choices of $d$ and $S$:

\begin{tabular}{c|l}
$S$ & Values of $d$\\
\hline
$\emptyset$ & 1,2,3,7,11\\
\hline
$\{2\}$ & 1,2, 3, 5, 6, 7, 11, 15, 19, 23\\
\hline
$\{2,3\}$ & 1,2, 3, 5, 6, 7, 10, 11, 13, 14, 15, 17, 19, 23, 31, 35, 39, 43, 47, 51,\\& 55, 59, 67, 71\\
\hline
$\{2,3,5\}$ & 1,2, 3, 5, 6, 7, 10, 11, 13, 14, 15, 17, 19, 21, 22, 23, 26, 29, 30, 31,\\& 33, 34, 35, 39, 43, 47, 51, 55, 59, 67, 71, 79, 83, 87, 91, 95, 103,\\& 107, 111, 115, 119, 123, 127, 131, 139, 143
\end{tabular}
\end{theorem}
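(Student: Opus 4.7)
My plan is to split Theorem~\ref{T:1} into the classical norm-Euclidean cases (which appear in every row and require no new work) and the genuinely new $S$-norm-Euclidean cases, which I reduce to a finite arithmetic verification.

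First, observe that $N_S(\eta) \le N(\eta)$ for every $\eta \in K$, since deleting primes of $S$ from the factorization of the rational $N(\eta)$ only shrinks its numerator and (at most) enlarges its denominator. Hence every norm-Euclidean field is $S$-norm-Euclidean for every $S$, and the classical theorem that $K = \Q(\sqrt{-d})$ is norm-Euclidean exactly for $d \in \{1, 2, 3, 7, 11\}$ accounts for these five entries in every row.

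For the remaining values, given $\xi \in K$ I first absorb the $S$-part of its denominator into a summand from $\mathcal{O}_S$, reducing to $\xi = \alpha/n$ with $\alpha \in \mathcal{O}$ and $\gcd(n, p) = 1$ for every $p \in S$. Writing a candidate $\gamma \in \mathcal{O}_S$ as $\beta/m$ with $m \in T$ and $\beta \in \mathcal{O}$, and using that $m^2$ is a product of primes in $S$ while $n^2$ is coprime to $S$, a short calculation gives
\[
N_S(\xi - \gamma) \;=\; \frac{N_S(m\alpha - n\beta)}{n^2},
\]
where on the right $N_S$ is applied to an element of $\mathcal{O}$ and returns the non-$S$-part of its integer norm. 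Hence $K$ is $S$-norm-Euclidean if and only if for every pair $(\alpha, n)$ as above there exist $m \in T$ and $\beta \in \mathcal{O}$ satisfying $N_S(m\alpha - n\beta) < n^2$. This is the reformulation I would work from.

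I would then prove a reduction lemma producing an explicit bound $n_0 = n_0(d, S)$ such that the condition above is automatic whenever $n > n_0$. The idea is that for $m \in T$ chosen to force $S$-divisibility of $N(m\alpha - n\beta)$, the covering-radius bound $N(m\alpha - n\beta) \le R^2 n^2$ yields $N_S(m\alpha - n\beta) \le R^2 n^2 / \rho$, where $\rho$ is the guaranteed $S$-factor; taking $n_0$ with $\rho > R^2$ completes the reduction. After this step, only finitely many pairs $(\alpha, n)$ with $n \le n_0$ and $\alpha$ in a set of representatives of $\mathcal{O}/n\mathcal{O}$ need be tested, and for each one a search over $(m, \beta)$ with $m \le M_0$ and $\beta$ near $m\alpha/n$ either locates suitable $(m, \beta)$ or disproves the property.

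The main obstacle is calibrating the reduction so that the resulting finite check is tractable: the bound $n_0$ depends sensitively on the splitting behavior of the primes of $S$ in $\mathcal{O}$ (which governs the $S$-gain $\rho$ available for a given $\alpha$), and for the largest entries of the $S = \{2,3,5\}$ row one must also ensure the enumeration over $(\alpha, n)$ remains manageable. A secondary difficulty is the finite check itself, which for the third and fourth rows is substantial but routine once the reduction lemma is in place.
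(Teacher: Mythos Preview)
Your initial inequality $N_S(\eta)\le N(\eta)$ for all $\eta\in K$ is false: deleting $S$-primes from the \emph{denominator} of $N(\eta)$ makes the denominator smaller, not larger (e.g.\ $K=\Q(i)$, $S=\{2\}$, $\eta=1/2$ gives $N(\eta)=1/4$ but $N_S(\eta)=1$). The inequality holds only when $\eta$ has $S$-coprime denominator, which is how the paper states it. Your conclusion that norm-Euclidean implies $S$-norm-Euclidean is still correct, but it needs the preliminary step of clearing the $S$-part of the denominator of $\xi$ first.

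More seriously, your reduction lemma is not a lemma yet, and the sketch you give for it is internally inconsistent. You want a threshold $n_0$ above which the condition is automatic, but your proposed argument is ``$N_S(m\alpha-n\beta)\le R^2n^2/\rho$, and take $\rho>R^2$.'' The inequality $\rho>R^2$ does not involve $n$ at all, so either it holds and the condition is automatic for every $n$ (making the finite check vacuous), or it fails and you have proved nothing for any $n$. You also have not said how one \emph{chooses} $m$ to force an $S$-factor $\rho$ in $N(m\alpha-n\beta)$ while simultaneously keeping $\beta$ within the covering radius of $m\alpha/n$; these two constraints interact, and for the larger values of $d$ in the table the covering radius $R^2\approx (1+d)/4$ is already in the range $30$--$40$, so a uniform $\rho>R^2$ is far from obvious.

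The paper avoids all of this. It uses only the trivial bound $N_S(\xi-\alpha)\le c^2N(\xi-\alpha)$ (your reformulation with $\rho=1$), which reduces the question to covering the fundamental domain $\mathcal{F}$ by discs of radius $1/c$ centered at $S$-integers with denominator $c\in T$. The key step you are missing is that this two-dimensional covering problem collapses to a one-dimensional one: projecting to the imaginary axis, one need only check that finitely many explicit intervals $I_j^k=\bigl((j-\sqrt{3/D})/k,\,(j+\sqrt{3/D})/k\bigr)$ with $k\in T$ cover $[0,1]$. For the four rows of the table this requires checking $2$, $3$, $7$, and $13$ intervals respectively, independent of $d$, so the entire theorem reduces to a handful of inequalities rather than an enumeration over residues $\alpha\in\O/n\O$ for all $n\le n_0$.
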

Note the previous theorem is not claiming these lists are complete or even finite,
although both are true when $S=\emptyset$.
As discussed, when $S=\emptyset$, this result is classical.
When $S=\{2\}$ these examples appear in~\cite{LENSTRA,VDL}.
However, we have not seen any examples with $S=\{2,3\}$ explicitly given in the literature.

A natural question arises:  Does there always exists a set $S$ so that $K$ is $S$-norm-Euclidean?
The answer to this question is yes, and we give a quantified version of this claim.
Before stating this result, we require an additional piece of terminology and notation.
We write $D$ to denote the absolute value of the discriminant of $K$.
By definition, the discriminant is $\det([1, w; 1, \overline{w}])^2$ where
$\overline{w}$ denotes the complex conjugate of $w$;
therefore the discriminant equals $-D$ where
$$
D=
\begin{cases}
  4d & -d\equiv 2,3\pmod{4}\\
  d & -d\equiv 1\pmod{4}.
\end{cases}
$$
Given $x\in\R$ we write $\lceil x\rceil$ to denote the ceiling of $x$;
i.e., $\lceil x\rceil$ is the smallest integer greater than $x$.

\begin{theorem}\label{T:2}
  Let $K$ be an complex quadratic field of discriminant $-D$.
  If $S$ contains all primes less than
  $\lceil\sqrt{D}/\sqrt{3}\rceil$,
  then $K$ is $S$-norm-Euclidean.
\end{theorem}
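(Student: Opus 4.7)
Given $\xi\in K$, write $\xi=\alpha/(uv)$ with $\alpha\in\O$, where the positive-integer denominator factors as the product of $u\in T$ (the $S$-part) and $v$ coprime to every prime of $S$. If $v=1$, then $\xi\in\O_{S}$ and we take $\gamma=\xi$; otherwise every prime factor of $v$ is at least $p_0:=\lceil\sqrt{D}/\sqrt{3}\rceil$ by hypothesis, so $v\geq p_0$ and hence $v^2\geq D/3$. Looking for $\gamma$ of the form $\beta/u$ with $\beta\in\O$ and using that $N_S$ is multiplicative with $N_S(u)=1$ and $N_S(v)=v^2$, one finds
\[
N_S(\xi-\gamma) \;=\; \frac{N_S(\alpha-v\beta)}{v^2},
\]
so the task reduces to producing $\beta\in\O$ such that $N_S(\alpha-v\beta)<v^2$; equivalently, to finding a representative of the coset $\alpha+v\O\subset\O$ whose norm has non-$S$-part strictly less than $v^2$.

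The key tool I would invoke is Hermite's inequality: every rank-$2$ $\Z$-lattice $\Lambda\subset\C$ with covolume $V$ contains a nonzero element $\rho$ with $|\rho|^2\leq(2/\sqrt{3})V$. Applied to the ideal $\mathfrak{a}:=(\alpha,v)\subseteq\O$, whose covolume is $N(\mathfrak{a})\sqrt{D}/2$, this yields a nonzero $\rho\in\mathfrak{a}$ with $N(\rho)\leq N(\mathfrak{a})\sqrt{D/3}$. Since $\alpha\notin v\O$ (else $\xi\in\O_S$ already), $\mathfrak{a}\supsetneq v\O$ strictly, so $N(\mathfrak{a})$ is a proper divisor of $v^{2}$. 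Because the smallest prime factor of $v$ is at least $p_0$, one obtains $N(\mathfrak{a})\leq v^2/p_0$, and together with $\sqrt{D/3}\leq p_0$ this gives $N(\rho)\leq v^2$.

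The main obstacle is then twofold: arranging that $\rho$ (or a convenient multiple of it) lies in the specific coset $\alpha+v\O\subset\mathfrak{a}$, and promoting the bound $N(\rho)\leq v^2$ to a strict bound on the non-$S$-part. I expect both points can be handled by exploiting the same Hermite argument applied to $\O$ itself: it yields a short $\omega\in\O$ with $N(\omega)\leq\sqrt{D/3}<p_0$, so every prime factor of $N(\omega)$ lies in $S$, making $\omega$ a unit in $\O_S$. Multiplying $\rho$ by a suitable lift of such a unit modulo $v\O$ (whose existence is guaranteed by $\gcd(N(\omega),v)=1$, since every prime dividing $v$ is at least $p_0 > N(\omega)$) permits shifting $\rho$ between the $v^2/N(\mathfrak{a})$ cosets of $v\O$ inside $\mathfrak{a}$ without enlarging the non-$S$-part of its norm. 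The inequality $\sqrt{D/3}<p_0$---strict outside of the isolated cases in which $\sqrt{D/3}$ is itself the integer $p_0$, which admit separate verification---then delivers $N_S(\alpha-v\beta)<v^2$ as required.
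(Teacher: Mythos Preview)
Your proposal has a genuine gap at precisely the step you yourself flag as the ``main obstacle.'' Hermite's inequality gives you a short nonzero $\rho\in\mathfrak{a}=(\alpha,v)$, but what you need is an element of the specific \emph{coset} $\alpha+v\O\subset\mathfrak{a}$ with small $S$-norm, and your proposed fix---multiply $\rho$ by a suitable $S$-unit $\omega$ to shift cosets---does not work as stated. Applying Hermite to $\O$ itself is vacuous: since $1\in\O$ already has $N(1)=1$, the shortest nonzero vector of $\O$ is always a unit of $\O$, so the bound $N(\omega)\leq\sqrt{D/3}$ produces nothing beyond $\omega\in\O^\times$, and multiplication by a unit of $\O$ certainly does not move $\rho$ to a different coset of $v\O$. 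Even if you grant yourself all $\omega\in\O$ whose norm is supported on $S$, you would still need the images of such $\omega$ in $(\O/v\O)^\times$ to act transitively on the nonzero classes of $\mathfrak{a}/v\O$; nothing in your argument establishes this, and there is no reason it should hold. For instance, when $v=p$ is an inert prime one has $\mathfrak{a}=\O$ and $(\O/p\O)^\times\cong\mathbb{F}_{p^2}^\times$ is cyclic of order $p^2-1$, and the image of the $S$-units can easily be a proper subgroup, so that most cosets---in particular possibly $\alpha+p\O$---are never reached.

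By contrast, the paper's proof avoids ideals and the geometry of numbers altogether. Via Corollary~\ref{C:main} the two-dimensional covering question is reduced to a one-dimensional approximation problem: it suffices that for every rational $y\in[0,1]$ there exists $c\in T$ with $\{cy\}$ within $\sqrt{3/D}$ of $0$ or $1$. Because $S$ contains all primes below $\lceil\sqrt{D/3}\rceil$, the multiplicative set $T$ contains every integer $1,2,\dots,\lceil\sqrt{D/3}\rceil$, and then the standard pigeonhole argument from Dirichlet's approximation theorem supplies the required $c$. The decisive idea, then, is not locating a short vector in an ideal but the elementary observation that $T$ contains a full initial segment of the positive integers, which is exactly what Dirichlet's argument consumes.
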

In a first course in algebraic number theory, one shows that the class group
is generated by the set of all prime ideals whose norm is less than the Minkowski bound;
it turns out that Theorem~\ref{T:2} implies this result in our setting.  However our proof
of Theorem~\ref{T:2} does not use the Minkowski bound, the class group, or even the notion of an ideal!
For those familiar with the rudiments of algebraic number theory,
further discussion is given in an appendix (see~\S\ref{S:appendix}).
%Until then, we will attempt to keep the algebraic number theory to a minimum.
We should note that Theorem~\ref{T:2} appears in~\cite{MARKANDA} but that our proof is different.
%Observe that one can replace the cumbersome $\lceil \sqrt{D}/\sqrt{3}\rceil$ with the cleaner, but weaker bound of $\sqrt{D}$.

To give another example, consider
$K=\Q(\sqrt{-163})$ and $S=\{2,3,5,7\}$, where
$$
\O_S=\Z\left[\frac{1+\sqrt{-163}}{2}, \frac{1}{120}\right]
=
\Z\left[\sqrt{-163}, \frac{1}{120}\right]
\,.
$$
%$$
%K=\Q(\sqrt{-163})
%\,,\;\;
%S=\{2,3,5,7\}
%\,,\;\;
%\O_S=\Z\left[\frac{1+\sqrt{-163}}{2}, \frac{1}{120}\right]
%=
%\Z\left[\sqrt{-163}, \frac{1}{120}\right]
%\,.
%$$
Theorem~\ref{T:2} implies that $K$ is $S$-norm-Euclidean;
however, $K$ is not norm-Euclidean (even though $\O=\Z\left[(1+\sqrt{-163})/2\right]$ is a PID and hence a UFD).

On the other hand, one might consider a fixed set $S$ and ask
which complex quadratic fields $K$ are $S$-norm-Euclidean.
When $S$ contains several elements, this seems to be a difficult problem,
so we restrict ourselves to the case where $S$ contains a single prime $p\in\Z$.
Even in this simplest case, we will further restrict which $(d,p)$ pairs we consider
in order to obtain a complete classification.
In what follows $(n/p)$ denotes the usual Legendre symbol;
namely $(n/p)=1$ if $x^2\equiv n\pmod {p}$ has a solution with $x\not\equiv 0\pmod{p}$,
$(n/p)=0$ if $p$ divides $n$, and $(n/p)=-1$ otherwise.
%\begin{theorem}\label{T:3}
%Let $K=\Q(\sqrt{-d})$ with $d>0$ squarefree and $S=\{p\}$.
%\begin{enumerate}
%\item
%Suppose $p=2$ and $-d\not\equiv 1\pmod{8}$.
%Then $K$ is $S$-norm-Euclidean if and only if
%$d\in\{1,2,3,5,6,7,10,11,19\}$.
%\item
%Suppose $p$ is odd and $(-d/p)\neq 1$.  Then $K$ is $S$-norm-Euclidean if and only if
%%$d$ belongs to the list:
%\begin{align*}
%&p=3,\; d\in\{1,2,3,7,11,15\}\\
%&p=5,\; d\in\{1,2,3,7,11,15,35\}\\
%&p=7,\; d\in\{1,2,3,7,11,35\}\\
%&p\geq 11,\; d\in\{1,2,3,7,11\}
%\end{align*}
%
%%\begin{align*}
%%\{1,2,3,7,11,15\} & \text{ when $p=3$}\\
%%\{1,2,3,7,11,15,35\} & \text{ when $p=5$}\\
%%\{1,2,3,7,11,35\} & \text{ when $p=7$}\\
%%\{1,2,3,7,11\} & \text{ when $p\geq 11$}
%%\end{align*}
%
%%\begin{align*}
%%p=3:\quad & \{1,2,3,7,11,15\}\\
%%p=5:\quad & \{1,2,3,7,11,15,35\}\\
%%p=7:\quad & \{1,2,3,7,11,35\}\\
%%p\geq 11:\quad & \{1,2,3,7,11\}
%%\end{align*}
%%$d\in\{1,2,3,7,11\}$ or $(d,p)\in\{(15,3),(15,5),(35,5),(35,7)\}$.
%\end{enumerate}
%\end{theorem}
\begin{theorem}\label{T:3}
Let $K=\Q(\sqrt{-d})$ with $d>0$ squarefree and $S=\{p\}$.
\begin{enumerate}
\item
Suppose $p\geq 11$ and $(-d/p)\neq 1$.
Then $K$ is $S$-norm-Euclidean only if 
$d\in\{1,2,3,7,11\}$. 
\item
Suppose $p=2$ and $-d\not\equiv 1\pmod{8}$.
Then $K$ is $S$-norm-Euclidean if and only if
$d\in\{1,2,3,5,6,10,11,19\}$.
\item
Suppose $p\in\{3,5,7\}$ and $(-d/p)\neq 1$.  Then $K$ is $S$-norm-Euclidean if and only if
one of the following holds:
%$d$ belongs to the list:
\begin{align*}
&p=3,\; d\in\{1,3,7,11,15\}\\
&p=5,\; d\in\{2,3,7,15,35\}\\
&p=7,\; d\in\{1,2,7,11,35\}
%&p\geq 11,\; d\in\{1,2,3,7,11\}
\end{align*}
\end{enumerate}
\end{theorem}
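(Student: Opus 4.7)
The proof rests on a reformulation. Writing $\gamma\in\O_S$ as $\alpha/p^j$ and $\xi\in K$ (after clearing any $p$-part from its denominator) as $\beta/m$ with $\gcd(m,p)=1$, one computes
\[
N_S(\xi-\gamma)\;=\;\frac{N_S(\beta p^j - m\alpha)}{m^2}\,.
\]
Combined with the identity $N_S(p\delta)=N_S(\delta)$, this shows that $K$ is $\{p\}$-norm-Euclidean if and only if, for every positive integer $m$ coprime to $p$ and every residue class $\bar\eta\in\O/m\O$, there exists $\delta\in\O$ with $\delta\equiv\bar\eta\pmod{m\O}$ and $N_S(\delta)<m^2$. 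The $S$-norm-Euclidean problem thus becomes a sequence of residue-covering questions, one for each modulus.

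For the ``if'' direction in each part, I would verify the covering condition for each of the listed $d$ directly, exhibiting for every small modulus $m$ and every residue an explicit $\delta\in\O$ with the required small $N_S$; large $m$ are handled by a geometric density argument using that multiplying $\delta$ by powers of the prime above $p$ shifts its residue at no cost in $N_S$.

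For the ``only if'' direction in part~(1), I take $m=2$. Since $p\geq 11$ is odd, $\langle p\rangle$ is trivial in $\O/2\O$, so every residue class must be covered individually, and the delicate class is that of $w$. When $p$ is inert, the hypothesis $(-d/p)=-1$ forces $p\nmid N(\delta)$ for any nontrivial $\delta\equiv w\pmod{2\O}$, hence $N_S(\delta)=N(\delta)\geq N(w)$, and the inequality $N(w)<4$ narrows $d$ down to $\{1,2,3,7,11\}$. When $p$ is ramified, the element $\sqrt{-d}$ itself covers the $w$-class with $N_S=d/p<4$ precisely for the exceptional values $d\in\{p,2p,3p\}$; these I would handle with $m=3$, exhibiting residue classes in $\O/3\O$ that no $\delta$ with $N_S<9$ reaches. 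The hypothesis $p\geq 11$ is used exactly to ensure $\gcd(p,6)=1$ so that both $m=2$ and $m=3$ are admissible.

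Parts~(2) and~(3) follow the same template with $m$ adapted to $p$: for $p=2$, the hypothesis $-d\not\equiv 1\pmod{8}$ ensures $2$ is inert or ramified, and $m=3$ (supplemented by $m=5$ or $7$ in borderline cases) supplies the primary obstruction; for $p\in\{3,5,7\}$, one combines $m=2$ with a second small modulus. The \textbf{main obstacle} will be carrying out the necessity direction for the small excluded $d$, each of which requires an individual check: one must perform a case analysis on the splitting of small primes in $K$, compute the subgroup $\langle p\rangle\leq(\O/m\O)^*$, and then verify that some residue class falls outside the explicit set of small-$N_S$ representatives of $\O$.
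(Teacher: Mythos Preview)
Your residue-covering reformulation is correct and is precisely what the paper does, just in different dress: the test point $\xi_0=(1+w)/2$ that the paper uses for odd $p$ is your modulus $m=2$ with residue class $1+w$, and $\xi_0=(1+w)/3$ for $p=2$ is your $m=3$. One small inefficiency in your version: you work with the class of $w$ rather than $1+w$. When $-d\equiv 2,3\pmod 4$ and $p$ is ramified, the $w$-class is covered by $\sqrt{-d}$ with $N_S=d/p$, which forces you to pass to a second modulus for $d\in\{p,2p,3p\}$; the paper's class $1+w$ instead yields $N_S\geq\min\bigl(1+d,\,p+d/p\bigr)$, which for any odd $p$ already rules out every $d\geq 3$, so no auxiliary modulus is needed in part~(1).

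The genuine gap is on the sufficiency side, which you describe as ``direct verification'' and a ``geometric density argument.'' The paper carries this out via an explicit covering of the fundamental domain by disks of radius $1/\denom_S(\alpha)$ centered at $\alpha\in\O_S$, reduced in turn to covering $[0,1]$ by a finite set of intervals. This disposes of most of the listed $d$, but five pairs---$(10,2)$, $(15,3)$, $(15,5)$, $(35,5)$, $(35,7)$---narrowly fail the interval criterion and each needs an ad~hoc argument in a separate section. For $d=35$ the paper introduces a new ingredient: when $p\mid d$ and $\alpha=(a+bw)/c$ satisfies $2a+b\equiv 0\pmod p$, the admissible disk radius can be boosted from $1/c$ to $\sqrt{p}/c$. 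Your proposal does not anticipate these exceptional cases, and your diagnosis of the ``main obstacle'' is inverted: the necessity direction is short and uniform, while sufficiency at the borderline values of $d$ is where the real work lies.
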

We note that the previous theorem follows from Theorem~0.19 of~\cite{VDL};
however, our approach requires very little background knowledge and
our proof is relatively short.
In comparing Theorem~\ref{T:1} with Theorem~\ref{T:3}, 
the reader may have noticed the addition of $d=10$ to the list when $S=\{2\}$.
It turns out the technique used to prove Theorem~\ref{T:1} does not quite apply
in the case of $(d,p)=(10,2)$,
but we find a way to treat this case and four other ``exceptional cases''
in Section~\ref{S:except}.

We view the following question as a natural one.
In light of Theorem~\ref{T:3} one can restrict attention to those $(d,p)$ pairs such that $(-d/p)=1$.

%A natural question is the following:
\begin{problem}
Are there infinitely many $d$ for which there exists $p$ such that $K$ is $S$-norm-Euclidean when $S=\{p\}$?
\end{problem}
%This is equivalent to the condition that the prime $p$ splits in the number field $\Q(\sqrt{-d})$.
Associated to any number field $K$ is a finite abelian group, called the class group of $K$.
Roughly speaking, this group governs the complexity of factorizations of elements of $\O$
into irreducible elements, and this group is trivial if and only if unique factorization holds.
It turns out that if $K$ is $S$-norm-Euclidean with $S=\{p\}$, then the class group of $K$ is cyclic.
According to the Cohen--Lenstra heuristics (see~\cite{CL}),
the latter event should happen quite often (roughly 97.75\% of the time) for fields of prime discriminant,
but we currently cannot even prove that it happens infinitely often.
(See \S\ref{S:appendix} for additional discussion.)
If one were to resolve the problem above in the affirmative,
then one would prove that class group of $K$ is cyclic infinitely often,
resolving a major open problem.
(Of course, this likely means that producing such a proof may be difficult.)
On the other hand, if the resolution of this problem is in the negative,
then proving such a result would be interesting in its own right.
We note that Stark has already suggested looking at Euclidean rings of $S$-integers
in connection with class number problems (see~\cite{STARK}).

Finally, since we are posing problems anyway, here is another:

\begin{problem}
Are there infinitely many $d$ for which $K$ is $S$-norm-Euclidean when $S=\{2,3\}$?
\end{problem}

%\begin{theorem}
%Let $K=\Q(\sqrt{-d})$ with $d>0$ squarefree and $S=\{p\}$.
%Suppose $p$ is odd and $(-d/p)= 1$.  Then $K$ is $S$-norm-Euclidean if
%$$
%d\in\{1,2,3,5,7,11,15\}
%\,.
%$$
%\end{theorem}

%%%%%%%%%%%%%%%%%%%%%%%%%%%%%%%%%%%%%%%%%%%%%%%%%%%%%%%%%%%%%%%

%%%%%%%%%%%%%%%%%%%%%%%%%%%%%%%%%%%%%%%%%%%%%%%%%%%%%%%%%%%%%%%

%\section{Background}\label{S:background}
%
%We call the setup from \S\ref{S:intro} and recall a small amount of additional background.
%
%\textcolor{blue}{
%There are a few things that need to go here:
%\begin{enumerate}
%\item
%Talk more about the norm:
%the two forms $a^2+db^2$ and $a^2+ab+((1+d)/4)b^2$.
%\item
%Talk about unit group $\O_S^\times$.
%\item
%Talk about the fundamental domain
%$$
%  \mathcal{F}=\{x+yw\mid 0\leq x,y\leq 1\}
%  \,.
%$$
%\item
%Give a brief norm-Euclidean proof for $d=1,3$ to give the flavor.
%\end{enumerate}
%}
%
%For $a,b\in\Z$, one has $N(a+bw)=a^2+db^2$
%$$
%  N(a+bw)=
%  \begin{cases}
%  a^2+db^2 & -d\equiv 2,3\pmod{4}\\[1ex]
%  a^2+ab+\frac{1+d}{4} b^2 & -d\equiv 1\pmod{4}
%  \end{cases}
%$$
%and hence $N$ maps $\O\to\Z$.
%
%We define the fundamental domain for $K$ as
%$$
%  \mathcal{F}=\{x+yw\mid 0\leq x,y\leq 1\}
%  \,.
%$$
%When $-d\equiv 2,3\pmod{4}$ this takes the shape of a rectangle and when $-d\equiv 1\pmod{4}$ this
%takes the shape of a parallelogram.  Notice that $\C=\mathcal{F}+\O$ where the latter notation denotes the set of all sums.  It follows that $K$ is norm-Euclidean if and only if for all $\xi\in K\cap\mathcal{F}$
%there exists $\gamma\in\O$ such that $|\xi-\gamma|<1$.

%%%%%%%%%%%%%%%%%%%%%%%%%%%%%%%%%%%%%%%%%%%%%%%%%%%%%%%%%%%%%%%

\section{Main idea}\label{S:main}

In this section, we develop the main idea of this paper,
which provides sufficient conditions for $K$ to be $S$-norm-Euclidean.
We will see that Theorems~\ref{T:1} and~\ref{T:2} will then immediately follow.
We will give a small amount of additional background material along the way as necessary.
Let $K$ denote a complex quadratic field and adopt all the notation given in~\S\ref{S:intro}.
In particular, we write  $K=\Q(\sqrt{-d})$ with $d>0$ squarefree and discriminant $-D$.

The norm function $N:K\to\Q$ also maps $\O\to\Z$.  In the case $-d\equiv 2,3\pmod{4}$ this is evident from our definition;
in the case where $-d\equiv 1\pmod{4}$, observe that
$$
  N(a+bw)=\left(a+\frac{b}{2}\right)^2+d\left(\frac{b}{2}\right)^2=a^2+ab+\left(\frac{d+1}{4}\right)b^2
  \,,
$$
where $(d+1)/4\in\Z$.
It follows from this that $N_S(\alpha)\leq N(\alpha)$ for all $\alpha\in\O$.
The following lemma gives an inequality that involves elements of $K$
that are not integral.
\begin{lemma}\label{lem:2}
Suppose
$$
  \xi=\frac{x+yw}{z}\in K
  \,,\quad
  \alpha = \frac{a+bw}{c} \in \mathcal{O}_{S}
$$
with $x,y,z\in\Z$, $z$ not divisible by any prime in $S$,
$a,b \in \mathbb{Z}$, $c\in\T$. Then 
$$N_S(\xi - \alpha) \leq c^2N(\xi - \alpha)\,. $$
\end{lemma}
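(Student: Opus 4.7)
The plan is to put $\xi-\alpha$ over the common denominator $cz$, then track how the $N_S$ operation transforms $N(\xi-\alpha)$ into $N_S(\xi-\alpha)$, using the key fact that the prime supports of $c$ and $z$ are disjoint relative to $S$.

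First I would compute
$$\xi - \alpha = \frac{(cx-az) + (cy-bz)w}{cz}.$$
The numerator $\beta := (cx-az) + (cy-bz)w$ lies in $\mathcal{O}$, so $M := N(\beta)$ is a non-negative integer. Since $cz \in \Z$, we have $N(cz) = (cz)^2$, and multiplicativity of $N$ gives
$$N(\xi - \alpha) = \frac{M}{c^2 z^2}, \qquad \text{hence} \qquad c^2\, N(\xi - \alpha) = \frac{M}{z^2}.$$

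Next I would factor $M = M_1 M_2$, where $M_1$ is the product of the prime-power factors of $M$ for primes in $S$, and $M_2$ is the product of the remaining prime-power factors. Every prime factor of $c^2$ lies in $S$ (since $c \in \T$), while no prime factor of $z^2$ lies in $S$ (by hypothesis on $z$). Hence the $S$-parts of the numerator and denominator of $N(\xi - \alpha) = M_1 M_2/(c^2 z^2)$ are precisely $M_1$ and $c^2$, and stripping them off according to the definition of $N_S$ gives
$$N_S(\xi - \alpha) = \frac{M_2}{z^2}.$$
Since $M_1 \geq 1$, we have $M_2 \leq M$, and therefore
$$N_S(\xi - \alpha) = \frac{M_2}{z^2} \leq \frac{M}{z^2} = c^2\, N(\xi - \alpha),$$
as desired.

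The only real subtlety — and the step I would be most careful with — is the identification $N_S(\xi-\alpha) = M_2/z^2$. One must check that no cancellation between the numerator $M_1 M_2$ and the denominator $c^2 z^2$ disturbs the clean split into $S$-part and non-$S$-part, which is exactly guaranteed by the disjointness of the prime supports: $z$ contains no primes of $S$ and $c$ contains only primes of $S$. Once this bookkeeping is in hand, the inequality reduces to the trivial observation $M_1 \geq 1$.
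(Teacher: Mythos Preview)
Your proof is correct and follows essentially the same route as the paper: clear the denominator $cz$ so that the numerator $\beta = cz\xi - cz\alpha$ lies in $\mathcal{O}$, then use that $N_S(\beta)\le N(\beta)$ for $\beta\in\mathcal{O}$ together with the fact that $c$ carries only $S$-primes and $z$ carries none. The only difference is cosmetic: the paper invokes multiplicativity of $N_S$ to write $N_S(\xi-\alpha)=N_S(\beta)/z^2$ in one stroke, whereas you unpack this via the explicit factorization $M=M_1M_2$; both arrive at $N_S(\xi-\alpha)\le M/z^2 = c^2 N(\xi-\alpha)$ for the same reason.
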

\begin{proof}
Observe 
$$N(\xi - \alpha) = \frac{N(cz\xi-cz\alpha)}{z^2 c ^2}$$
and $cz\xi-cz\alpha\in\O$
which implies 
%$N_S(\xi - \alpha) \leq N(cz\xi-cz\alpha)/z^2= c^2N(\xi - \alpha)\,.$
$$N_S(\xi - \alpha) \leq \frac{N(cz\xi-cz\alpha)}{z^2} = c^2N(\xi - \alpha)\,.$$
\end{proof}

We define the fundamental domain for $K$ as
$$
  \mathcal{F}=\{x+yw\mid 0\leq x,y\leq 1\}\subseteq\C
  \,.
$$
This will assist us in visualizing the norm-Euclidean property graphically.
When $-d\equiv 2,3\pmod{4}$, $\mathcal{F}$ takes the shape of a rectangle, and when $-d\equiv 1\pmod{4}$, $\mathcal{F}$
takes the shape of a parallelogram.  Notice that any $z\in\C$ can be written as $z=z'+\alpha$ where $z'\in\mathcal{F}$ and $\alpha\in\O$.
Indeed, translations of $\mathcal{F}$ give a tessellation of $\C$.
Hence, when studying the norm-Euclidean property, we can focus our attention on the points in $\mathcal{F}$.
Indeed, observe that $K$ is norm-Euclidean if and only if for all $\xi\in K\cap\mathcal{F}$
there exists $\gamma\in\O$ such that $|\xi-\gamma|<1$.
In particular, $N(\xi-\gamma)=|\xi-\gamma|^2$ where $|\cdot|$ denotes the complex modulus.

\begin{definition}
For $\alpha\in\O_{S}$ let $\denom_S(\alpha)$ denote
the minimal $c$ over all representations $\alpha=(a+bw)/c$ with $a,b\in\Z$, $c\in\T$.
\end{definition}

\begin{lemma}\label{lem:3}
We have $K$ is $S$-norm-Euclidean if
for all $\xi\in K\cap\mathcal{F}$ there exists $\alpha\in\O_{S}$
such that $|\xi-\alpha|<1/\denom_S(\alpha)$.
\end{lemma}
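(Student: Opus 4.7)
The plan is to: (i) use translations by $\O_S$ to reduce an arbitrary $\xi \in K$ to a representative $\xi_0 \in K \cap \mathcal{F}$ whose denominator is coprime to every prime in $S$; (ii) apply the hypothesis to $\xi_0$; and (iii) invoke Lemma~\ref{lem:2} to upgrade the resulting complex-modulus inequality to one on $N_S$.

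For step~(i), given $\xi = (x+yw)/m$ with $m \in \Z_{>0}$, I would factor $m = m_1 m_2$, where $m_1 \in \T$ absorbs the prime factors of $m$ that lie in $S$ and $m_2$ is coprime to $S$. Bézout then gives $u,v \in \Z$ with $u m_1 + v m_2 = 1$, so
$$\xi = \frac{u(x+yw)}{m_2} + \frac{v(x+yw)}{m_1},$$
and the second summand lies in $\O_S$. Translating the first summand by an appropriate element of $\O$ lands it in $\mathcal{F}$ without enlarging the denominator. Packaging both translations together, $\xi = \xi_0 + \beta$ with $\beta \in \O_S$ and $\xi_0 \in K \cap \mathcal{F}$ having denominator dividing $m_2$, hence coprime to every prime in $S$.

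Step~(ii) produces some $\alpha = (a+bw)/c \in \O_S$ with $c = \denom_S(\alpha) \in \T$ and $|\xi_0 - \alpha| < 1/c$. Since $N(\eta) = |\eta|^2$ for $\eta \in K \subseteq \C$, step~(iii) combines Lemma~\ref{lem:2} with this bound to give
$$N_S(\xi_0 - \alpha) \leq c^2\, N(\xi_0 - \alpha) = c^2 |\xi_0 - \alpha|^2 < 1.$$
Setting $\gamma := \alpha + \beta \in \O_S$ then yields $\xi - \gamma = \xi_0 - \alpha$ and hence $N_S(\xi - \gamma) < 1$, which is precisely the $S$-norm-Euclidean property.

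The only real obstacle is step~(i): the hypothesis of Lemma~\ref{lem:2} demands a denominator coprime to $S$, which is not automatic from the usual tessellation of $\C$ by translates of $\mathcal{F}$ under $\O$. Splitting $m = m_1 m_2$ and applying Bézout is the natural fix, and once it is in place the remainder of the argument is a direct application of Lemma~\ref{lem:2}.
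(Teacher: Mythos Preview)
Your proof is correct and follows the same overall architecture as the paper's: reduce an arbitrary $\xi\in K$ to some $\xi_0\in K\cap\mathcal{F}$ whose denominator is coprime to every prime in $S$, apply the hypothesis, and then invoke Lemma~\ref{lem:2} together with $N(\eta)=|\eta|^2$. The only point of departure is how you carry out the reduction in step~(i). You split the denominator $m=m_1m_2$ and use B\'ezout to write $\xi$ as an element of $\O_S$ plus something with $S$-coprime denominator; the paper instead observes that any $u\in T$ is a unit in $\O_S$ with $N_S(u)=1$, so $N_S(u\xi-\alpha)=N_S(\xi-u^{-1}\alpha)$, and simply \emph{multiplies} $\xi$ by the $S$-part of its denominator to clear it. The multiplicative trick is a touch cleaner (no B\'ezout, no auxiliary $\beta$ to carry along), but your additive decomposition is perfectly valid and the remainder of the two arguments is identical.
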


\begin{proof}
Let $\xi\in K$ be arbitrary.  We must show that there exists
$\alpha\in\O_{S}$ such that $N_S(\xi-\alpha)<1$.
Without loss of generality, we may assume that 
$\xi=(x+yw)/z$ where $z$ is not divisible by any primes in $S$,
by multiplying $\xi$ by an element of $T$ if necessary;
indeed, if $u\in T$, then $N_S(u\xi-\alpha)=N_S(u)N_S(\xi-u^{-1}\alpha)$
and $N_S(u)=1$.
Moreover, we may assume $\xi\in\mathcal{F}$ by subtracting an appropriate element of $\O$.
%and $\xi=(x+yw)/z$ where $z$ is not divisible by any primes in $S$.
Noting that $N(\xi - \alpha) = |\xi - \alpha|^2$, Lemma~\ref{lem:2}
gives the result.
\end{proof}

\begin{example}
Consider the case of $K = \mathbb{Q}(\sqrt{-5})$ whose fundamental domain is depicted in Figure~\ref{Fig1}.
As one can see, we cannot cover $\mathcal{F}$ by circles of radius $1$ centered at the points of $\mathcal{O}$.
However, in light of Lemma~\ref{lem:3}, if we choose $S=\{2\}$, 
we can additionally consider circles of radius $1/2$ centered at points in $\mathcal{O}_{S}$
of the form $(a+bw)/2$.  In this case, we can cover $\mathcal{F}$ as depicted in Figure~\ref{Fig1}.
This allows us to conclude that $K$ is $S$-norm-Euclidean with $S=\{2\}$.

\begin{figure}[h!]
  \centering
  \begin{subfigure}[b]{0.3\linewidth}
    \includegraphics[width=\linewidth]{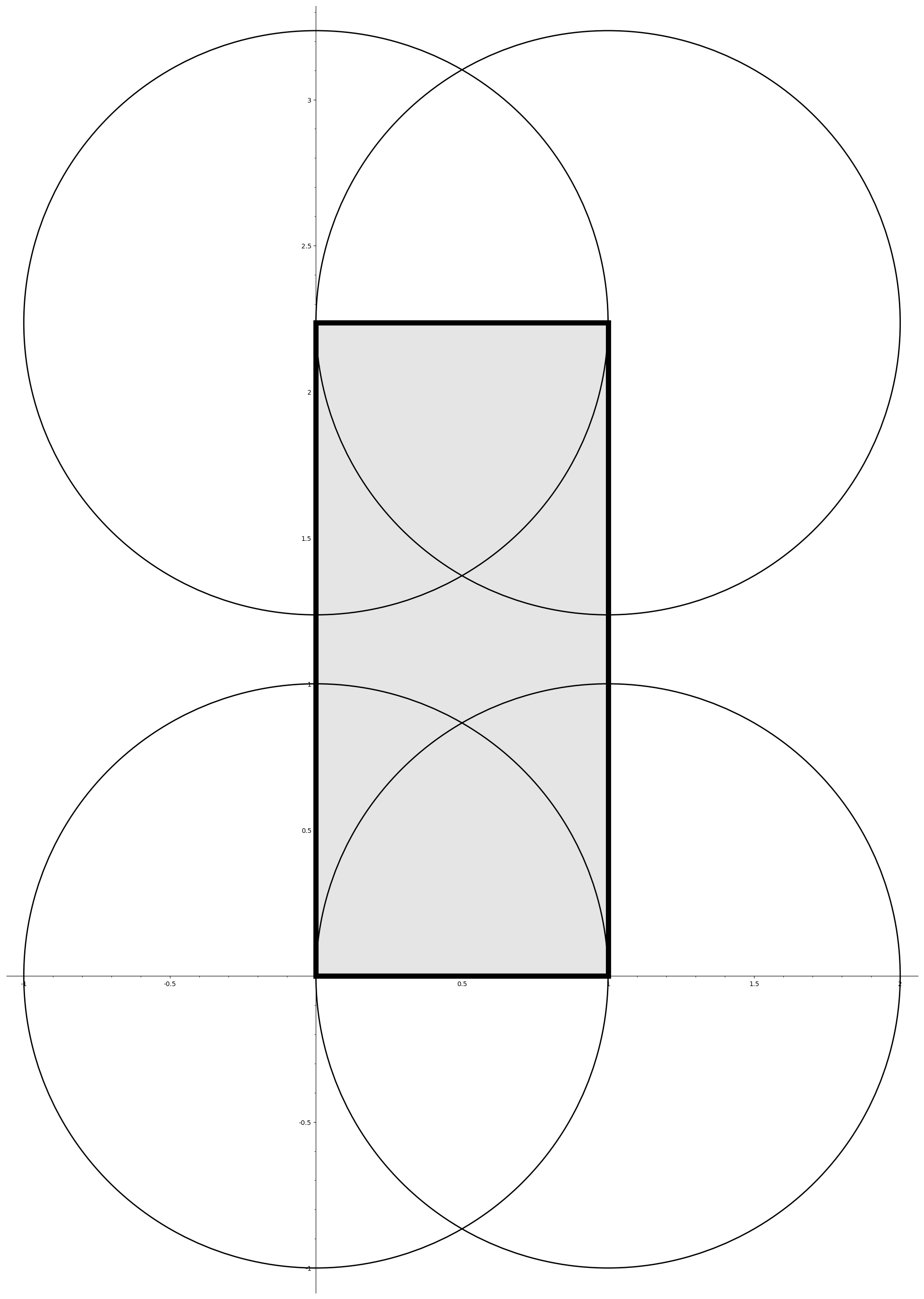}
    \caption{$S=\emptyset$}
  \end{subfigure}
  \begin{subfigure}[b]{0.3\linewidth}
    \includegraphics[width=\linewidth]{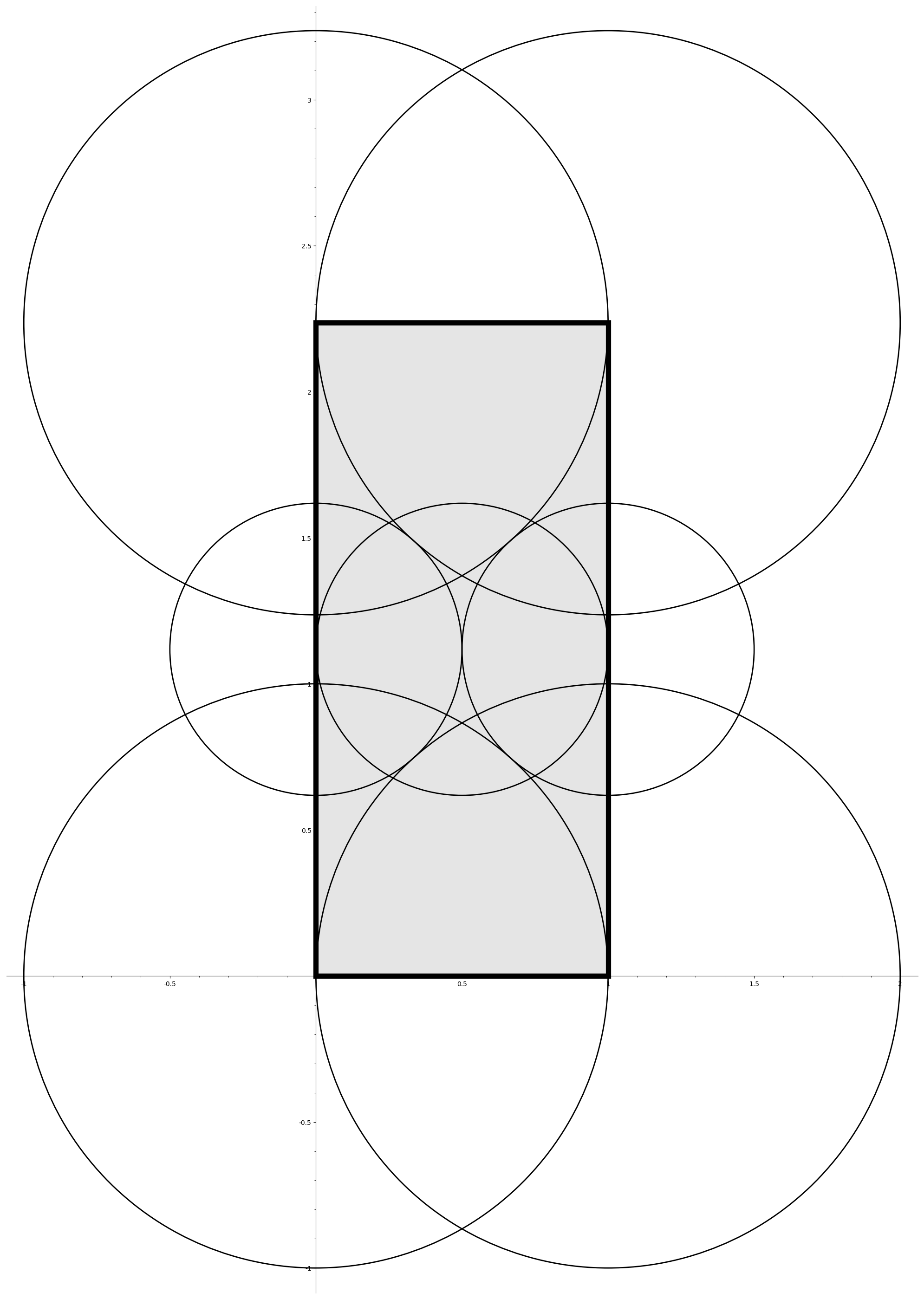}
    \caption{S=\{2\}}
  \end{subfigure}
  \caption{$K=\Q(\sqrt{-5})$}
  \label{Fig1}
\end{figure}
\end{example}

We now generalize the idea presented in the previous example.
Consider the collection of all $S$-integers in the fundamental domain (i.e., elements of $\O_{S}\cap \mathcal{F}$).
This collection is precisely the set of all $\alpha_{i,j}^k = (i+j\omega)/k$
for $k\in\T$ and $0\leq i,j\leq k$.
In light of Lemmas~\ref{lem:2} and~\ref{lem:3}, we know that the condition $\mathcal{F}\subseteq \bigcup_{i,j,k} \Ball$
is sufficient for $K$ to be $S$-Euclidean.  Moreover, we will only need to consider the $B_{1/k}(\alpha_{i,j}^k)$ where $\gcd(j,k)=1$.
To to each ``row of circles'' $\bigcup_{i=0}^k \Ball$, we can associate a
horizontal strip
\[
  R_j^k=\left\{x+iy\in\mathcal{F}:
  \left|y-\frac{j\im(w)}{k}\right|<\frac{\sqrt{3}}{2k}
  \right\}
  \,.
\]
Note that the $R_{j}^k$ are maximal with the property
that $R_{j}^k\subseteq \mathcal{F}\cap\bigcup_i \Ball$.
The $R_{j}^k$ give rise to intervals via the projection map $\pi(x+iy)= y/\im w$.
We define
$$
I_{j}^k = \left(\frac{j}{k} - \frac{\sqrt{3}}{2k \im \omega},\; \frac{j}{k} + \frac{\sqrt{3}}{2k \im \omega}\right)
=
\left(\frac{j- \sqrt{3/D}}{k},\; \frac{j + \sqrt{3/D}}{k}\right)
\,.
$$
Notice that $\im(w)=\sqrt{D}/2$, which allowed us to rewrite
the intervals in terms of $D$ rather than $\im w$.
The following lemma gives sufficient conditions for $K$ to be $S$-norm-Euclidean.
\begin{lemma}\label{L:balls}
Notation as above.
If $[0,1]\cap\Q\subseteq \bigcup_{j,k} I_{j}^k$, then $K$ is $S$-norm-Euclidean.
\end{lemma}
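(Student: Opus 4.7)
The plan is to reduce the claim to Lemma~\ref{lem:3}. I start with an arbitrary $\xi\in K\cap\mathcal{F}$, which I write as $\xi=x+yw$ with $x,y\in\Q\cap[0,1]$. By Lemma~\ref{lem:3}, it suffices to produce some $\alpha\in\O_S$ with $|\xi-\alpha|<1/\denom_S(\alpha)$. The strategy is to take $\alpha$ of the form $\alpha_{i,j}^k=(i+jw)/k$ (with $i\in\Z$) on whichever row of $S$-integers lies closest in height to $\xi$, and to show that the open disks of radius $1/k$ about these centers already cover $\xi$.

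The hypothesis, applied to $y\in[0,1]\cap\Q$, supplies $k\in\T$ and an integer $j$ with $y\in I_j^k$; that is, $|y-j/k|<\sqrt{3/D}/k$. Since $\im w=\sqrt{D}/2$, this rewrites as $|\im(\xi)-(j/k)\im w|<\sqrt{3}/(2k)$, placing $\xi$ in the horizontal strip through the row $\{\alpha_{i,j}^k:i\in\Z\}$ and within vertical distance $\sqrt{3}/(2k)$ of that row.

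The centers in this row are spaced $1/k$ apart in real part, so the closest one has real-part distance at most $1/(2k)$ from $\xi$. Combining with the vertical bound yields
\[
  |\xi-\alpha_{i,j}^k|^2<\frac{1}{4k^2}+\frac{3}{4k^2}=\frac{1}{k^2}\,.
\]
Since $k\in\T$ we have $\denom_S(\alpha_{i,j}^k)\le k$, so the above gives $|\xi-\alpha_{i,j}^k|<1/k\le 1/\denom_S(\alpha_{i,j}^k)$, and Lemma~\ref{lem:3} finishes the proof.

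The argument is essentially an unpacking of the geometric picture already built into the definitions of $I_j^k$ and $R_j^k$, so I do not anticipate a substantial obstacle. The one spot where any calibration is needed is the constant $\sqrt{3}/2$ in the width of the strip: it is exactly the largest value for which adjacent disks of radius $1/k$, centered $1/k$ apart, still jointly cover a horizontal line of that height, and this is precisely what makes the bound $1/k^2$ above come out strict.
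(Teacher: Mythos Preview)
Your argument is correct and follows the same route as the paper's proof: the paper simply applies $\pi^{-1}$ to the hypothesis and cites the already-stated inclusion $R_j^k\subseteq\bigcup_i B_{1/k}(\alpha_{i,j}^k)$ together with Lemma~\ref{lem:3}, whereas you unpack that inclusion explicitly via the $1/(2k)$-horizontal and $\sqrt{3}/(2k)$-vertical estimates. The only cosmetic difference is that you allow $i\in\Z$ rather than $0\le i\le k$, which is harmless since Lemma~\ref{lem:3} does not require $\alpha\in\mathcal{F}$.
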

\begin{proof}
Suppose $[0,1]\cap\Q\subseteq \bigcup_{j,k} I_{j}^k$.
Taking $\pi^{-1}$ of both sides allows use to observe that
\[
 K\cap\mathcal{F}\subseteq \bigcup_{j,k}\pi^{-1}(I_{j}^k)
 \subseteq
 \bigcup_{i,j,k} B_{1/k}(\alpha_{i,j}^k)
 \,.
\]
In light of the preceding discussion, we conclude that $K$ is $S$-norm-Euclidean.
\end{proof}

\begin{example}
Consider the field $K=\Q(\sqrt{-67})$
and choose $\{2,3\}$.
We write down the intervals $I_j^k$ with $0\leq j\leq k\leq 4$ and $\gcd(j,k)=1$,
and check that these cover the unit interval.  Thus $K$ is $S$-norm-Euclidean.
Figure~\ref{fig2} gives the intervals $I_j^k$ and the corresponding covering of $\mathcal{F}$.
We have sorted the intervals to make it easier to ``see'' them in the covering.

%\begin{figure}[h!]
%  \centering
%  \begin{subfigure}[b]{0.4\linewidth}
%    \includegraphics[width=\linewidth]{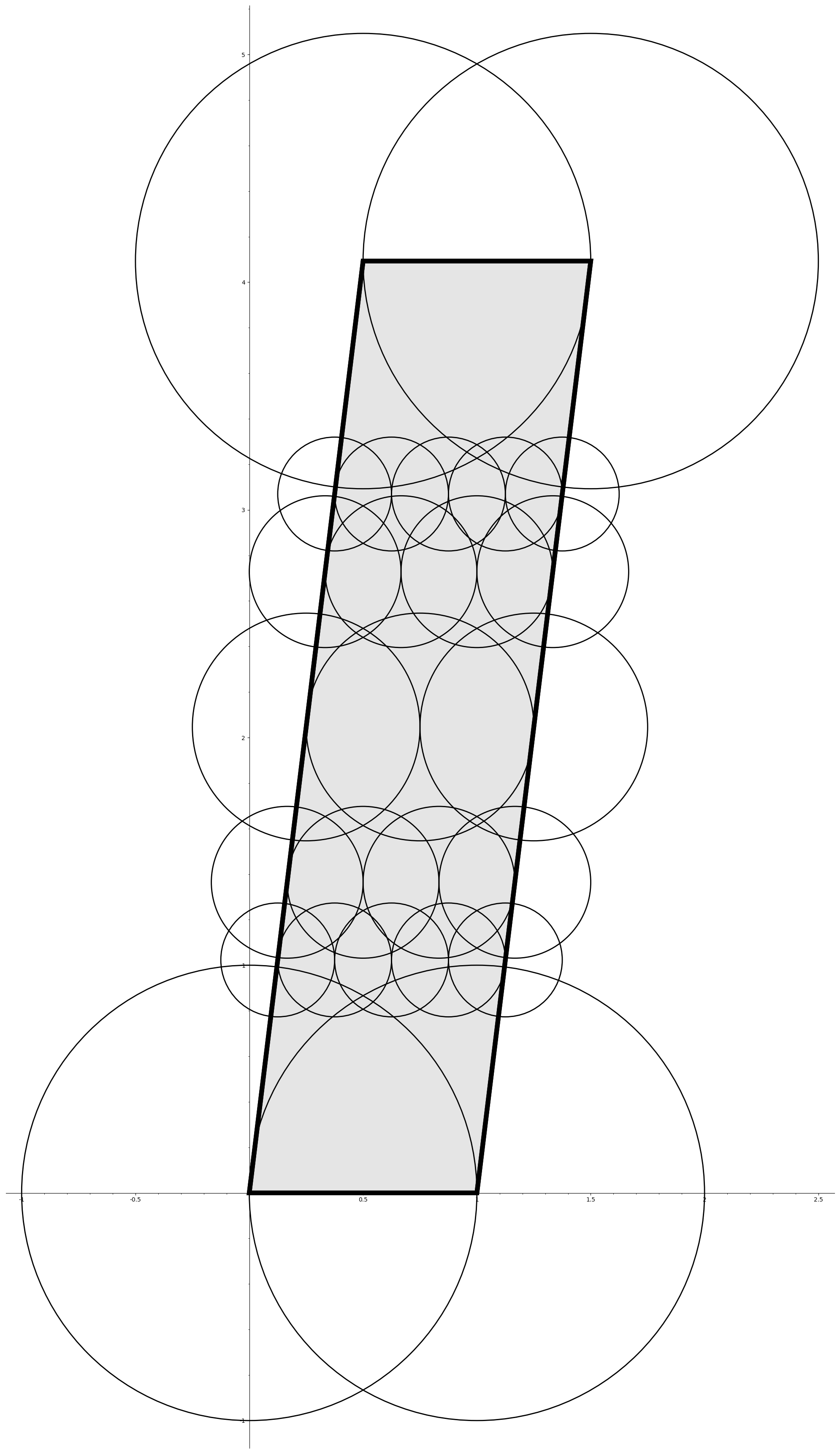}
%    \caption{Covering $\mathcal{F}$ by $\bigcup\Ball$}
%  \end{subfigure}
%  \begin{subfigure}[b]{0.4\linewidth}
%  \centering
%  \begin{tabular}{c}
%  $\left(\frac{1-\sqrt{3/67}}{1},\;\frac{1+\sqrt{3/67}}{1}\right)$
%  \\[2.5ex]
%    $\left(\frac{3-\sqrt{3/67}}{4},\;\frac{3+\sqrt{3/67}}{4}\right)$  
%  \\[2.5ex]
%  $\left(\frac{2-\sqrt{3/67}}{3},\;\frac{2+\sqrt{3/67}}{3}\right)$
%  \\[2.5ex]
%    $\left(\frac{1-\sqrt{3/67}}{2},\;\frac{1+\sqrt{3/67}}{2}\right)$
%  \\[2.5ex]
%  $\left(\frac{1-\sqrt{3/67}}{3},\;\frac{1+\sqrt{3/67}}{3}\right)$
%  \\[2.5ex]
%  $\left(\frac{1-\sqrt{3/67}}{4},\;\frac{1+\sqrt{3/67}}{4}\right)$
%  \\[2.5ex]  
%  $\left(\frac{-\sqrt{3/67}}{1},\;\frac{\sqrt{3/67}}{1}\right)$
%  \end{tabular}
%      \vspace{10ex}
%    \caption{The intervals $I_j^z$}
%  \end{subfigure}
%  \caption{$K=\Q(\sqrt{-67})$}
%      \label{fig2}
%\end{figure}
%\end{example}

\begin{figure}[h!]
  \centering
  \begin{subfigure}[b]{0.4\linewidth}
    \includegraphics[width=\linewidth]{pic_67_23.png}
    \caption{Covering $\mathcal{F}$ by $\bigcup\Ball$}
  \end{subfigure}
  \begin{subfigure}[b]{0.4\linewidth}
  \centering
  \begin{tabular}{c}
  $\left(\frac{1-\sqrt{3/67}}{1},\;\frac{1+\sqrt{3/67}}{1}\right)$
  $\approx \left(0.788, 1.211\right)$
  \\[2.5ex]
    $\left(\frac{3-\sqrt{3/67}}{4},\;\frac{3+\sqrt{3/67}}{4}\right)$  
    $\approx\left(0.697, 0.802\right)$
  \\[2.5ex]
  $\left(\frac{2-\sqrt{3/67}}{3},\;\frac{2+\sqrt{3/67}}{3}\right)$
      $\approx\left(0.596, 0.737\right)$
  \\[2.5ex]
    $\left(\frac{1-\sqrt{3/67}}{2},\;\frac{1+\sqrt{3/67}}{2}\right)$
        $\approx\left(0.394, 0.605\right)$
  \\[2.5ex]
  $\left(\frac{1-\sqrt{3/67}}{3},\;\frac{1+\sqrt{3/67}}{3}\right)$
      $\approx\left(0.262, 0.403\right)$
  \\[2.5ex]
  $\left(\frac{1-\sqrt{3/67}}{4},\;\frac{1+\sqrt{3/67}}{4}\right)$
      $\approx\left(0.197, 0.302\right)$
  \\[2.5ex]  
  $\left(\frac{-\sqrt{3/67}}{1},\;\frac{\sqrt{3/67}}{1}\right)$
      $\approx\left(-0.21, 0.211\right)$
  \end{tabular}
      \vspace{6ex}
    \caption{The intervals $I_j^k$}
  \end{subfigure}
  \caption{$K=\Q(\sqrt{-67})$}
      \label{fig2}
\end{figure}
\end{example}

%\begin{figure}[h!]
%  \centering
%    \includegraphics[width=0.3\linewidth]{pic_67_23.png}
%    \caption{$S=\emptyset$}\label{Fig:1}
%    \label{fig1}
%  \caption{$K=\Q(\sqrt{-67})$ with $S=\{2,3\}$}
%\end{figure}
%\end{example}

We rephrase Lemma~\ref{L:balls} in a slightly different form, that will be convenient in the sequel.
In what follows, we write $\{x\}=x-\lfloor x\rfloor$ to denote the fractional part of a real number $x$.
\begin{corollary}\label{C:main}
Suppose that for every rational $y\in[0,1]$ there exists $c\in\T$ such that
$$\{c y\}<\frac{\sqrt{3}}{\sqrt{D}}
\;\text{  or  }\;
\{c y\}>1-\frac{\sqrt{3}}{\sqrt{D}}
\,.
$$
Then $K$ is $S$-norm-Euclidean.
\end{corollary}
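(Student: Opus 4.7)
The plan is to show that the hypothesis of the corollary is simply a reformulation of the hypothesis of Lemma~\ref{L:balls}, using $k = c$. Fix any rational $y \in [0,1]$. By assumption there exists $c \in T$ such that either $\{cy\} < \sqrt{3}/\sqrt{D}$ or $\{cy\} > 1 - \sqrt{3}/\sqrt{D}$. In the first case I would take $j = \lfloor cy \rfloor$; in the second case I would take $j = \lceil cy \rceil$. Either way, a direct computation shows
\[
  |cy - j| < \frac{\sqrt{3}}{\sqrt{D}}\,,
\]
which, after dividing through by $c > 0$, is precisely the statement $y \in I_j^c$.

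Next I would verify the mild bookkeeping that $j$ is an admissible index, i.e.\ that $0 \le j \le c$ so that the interval $I_j^c$ is one of those appearing in the union in Lemma~\ref{L:balls}. Since $0 \le cy \le c$, both $\lfloor cy \rfloor$ and $\lceil cy \rceil$ automatically lie in $\{0,1,\dots,c\}$, so no real issue arises. I would also note that the coprimality condition $\gcd(j,c)=1$ is not needed: in the setup preceding Lemma~\ref{L:balls}, it is merely a remark that redundant rows can be discarded, but including additional rows only enlarges the union and does not affect the sufficient condition.

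Having produced such a $(j,c)$ for every rational $y \in [0,1]$, I would conclude
\[
  [0,1]\cap\Q \;\subseteq\; \bigcup_{j,\,k\in\T} I_j^k\,,
\]
and then invoke Lemma~\ref{L:balls} to deduce that $K$ is $S$-norm-Euclidean. Nothing in this argument is really difficult; the only point that demands a moment of care is recognizing that the two fractional-part inequalities together are precisely the condition ``$cy$ lies within $\sqrt{3}/\sqrt{D}$ of an integer,'' which is what is needed to land $y$ inside some $I_j^c$. That translation between fractional parts and nearest-integer distances is the one step I would write out explicitly, while the rest is essentially bookkeeping.
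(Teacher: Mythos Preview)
Your proposal is correct and follows the same route as the paper: the corollary is presented there simply as a rephrasing of Lemma~\ref{L:balls} with no separate proof, and what you have written is exactly the translation one needs---identifying $k=c$ and $j=\lfloor cy\rfloor$ or $\lceil cy\rceil$ so that $|cy-j|<\sqrt{3}/\sqrt{D}$ places $y$ in $I_j^c$. Your remarks on the range $0\le j\le c$ and on the irrelevance of the $\gcd(j,k)=1$ restriction are accurate bookkeeping that the paper leaves implicit.
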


\begin{remark}
Let $q$ be the smallest prime such that $q\not\in S$.
Since $\{c/q\}\geq 1/q$ for all $c\in T$,
the condition in Corollary~\ref{C:main} can only be satisfied when $D\leq 3q^2$.
\end{remark}

As illustrated in the previous example, our results thus far lead to the following simple procedure:
\begin{enumerate}
\item
Let $q$ be the smallest prime not in $S$ and set $X=3q^2$.
\item
Enumerate the intervals $I_{j}^k$ for $0\leq j\leq k\leq X$ with $\gcd(j,k)=1$.
\item
If $[0,1]\subseteq \bigcup_{j,k} I_{j}^k$, then $K$ is $S$-norm-Euclidean.
\end{enumerate}
Notice that this procedure will never prove that $K$ is not $S$-norm-Euclidean.
However, as was stated in Theorem~\ref{T:1} it allows us to show a number of complex quadratic fields $K$ are, in fact, $S$-norm-Euclidean.

\section{Proof of Theorems \ref{T:1} and \ref{T:2}}\label{S:proofs}

\begin{proof}[Proof of Theorem~\ref{T:1}]
The procedure described in the previous section leads to the proof of Theorem~\ref{T:1}.
Implementing this procedure on a computer gives the result immediately, but
this could be done by hand as the number of intervals necessary is not large.
Below we give the value of $X$ necessary and the number of intervals produced.

\begin{center}
\begin{tabular}{c |c| c}
$S$ & $X$ & $\#\{I_j^k\}$\\
\hline
$\emptyset$ & $1$ & $2$\\
$\{2\}$ & $2$ & $3$\\
$\{2,3\}$ & $4$ & $7$\\
$\{2,3,5\}$ & $6$ & $13$\\
\end{tabular}
\end{center}

\end{proof}

\begin{proof}[Proof of Theorem~\ref{T:2}]
The proof is a simple application of Corollary~\ref{C:main}.
Let $y\in[0,1]\cap \Q$.  Set $X=\sqrt{D}/\sqrt{3}$.
Suppose that $S$ contains all primes $\leq \lceil X\rceil $
so that $T$ contains all positive integers $\leq \lceil X \rceil$.
We will follow the proof of Dirichlet's approximation theorem
to show that there exists a $c \in \T$ such that $\{cy\} < 1/X$
or $\{cy\}>1-1/X$.
Subdivide the unit interval as
$$
[0,1) = \left[0,\frac{1}{X}\right)
\cup
\left[\frac{1}{X},\frac{2}{X}\right)
\cup\ldots \cup
\left[\frac{k-1}{X},1\right)
$$
where $k=\lceil X\rceil$ is the number of sub-intervals.
Each of the $k+1$ numbers
$\{0y\}, \{y\}, \ldots \{ky\}$
must land in one of the $k$ subintervals.
By the Pigeonhole Principle, there must exist an $s,t \in \mathbb{Z}$ satisfying $0 \leq s < t \leq \lceil X \rceil $ such that $\{sy\},\{ty\}$ differ by less than $X$.
Upon setting $c=t-s$ we obtain the desired result.
\end{proof}

\section{Proof of Theorem~\ref{T:3}}

Since Theorem~\ref{T:3} gives a classification of sorts, the proof will require some effort.
The first part concerns the case where $S=\{p\}$ with $p$ odd,
and the second part concerns the case where $S=\{2\}$.
In each part we must treat the cases of
$-d\equiv 2,3\pmod{4}$ and $-d\equiv 1\pmod{4}$ separately.
As first pass, the reader may wish to skip one or more of these subproofs.

\begin{proof}[Proof of Theorem~\ref{T:3} ($p$ is odd)]
Let $d>0$ be squarefree and $S=\{p\}$ with $p$ odd and $(-d/p)\neq 1$.  Set $\xi_0=(1+w)/2$.
Let $\alpha\in\O_{S}$ be arbitrary and write
$\alpha=(a+bw)/p^n$ where $a,b,n\in\Z$, $n\geq 0$, with $a$ and $b$ not both divisible by $p$.
Then we have
$$
  \alpha-\xi_0=\frac{(2a-p^n)+(2b-p^n)w}{2\cdot p^n}=\frac{p^m(A+Bw)}{2\cdot p^n}
  \,,
$$
where we have rewritten $2a-p^n=p^m A$ and $2b-p^n=p^mB$ with $m\geq 0$ maximal.
Note that $A,B\neq 0$ and both of $A$,$B$ cannot be divisible by $p$.

First consider the case where $-d\equiv 2,3\pmod{4}$.
We have
\begin{equation}\label{eqn:normp}
  N(\alpha-\xi_0)=\frac{p^{2m}(A^2+dB^2)}{2^2\cdot p^{2n}}
  \,.
\end{equation}
Suppose $(-d/p)=-1$.  We claim that $A^2+dB^2$ is not divisible by $p$.
Indeed if, $p$ divides $A^2+dB^2$, then we must have $A,B\not\equiv 0\pmod {p}$, lest
both $A$ and $B$ are divisible by $p$; in this case, one can solve $A^2+dB^2\equiv 0\pmod{p}$
to show that $-d$ is a square mod $p$, a contradiction.
Therefore $N_S(\alpha-\xi_0)\geq (1+d)/4\geq 1$ when $d\geq 3$.
We turn to the case where $(-d/p)=0$.  This is the same except that 
$A^2+dB^2$ is possibly divisible by one copy of $p$.
If $A+dB^2$ is not divisible by $p$, the result follows as before, so we may assume $p$ divides $A^2+dB^2$.
In this case, $A$ must be divisible by $p$ as well,
which implies that $A^2+dB^2\geq p^2+d$ and hence $N_S(\alpha-\xi_0)\geq (p^2+d)/(4p)\geq 1$
when $d\geq 3$.  Consequently, $K$ is not $S$-norm-Euclidean when $-d\equiv 2,3\pmod{4}$ and $d\geq 3$.

Next we turn to the case $-d\equiv 1\pmod{4}$.
We have
$$
 N(\alpha-\xi_0)=\frac{p^{2m}(A^2+AB+\frac{1+d}{4}B^2)}{2^2\cdot p^{2n}}
 =
 \frac{p^{2m}((2A+B)^2+dB^2)}{16\cdot p^{2n}}
 \,.
$$
%One has
%$$
%  N(\alpha-\xi_0)=\frac{(2a-p^n)^2+(2a-p^n)(2b-p^n)+\frac{1+d}{4}(2b-p^n)^2}{2^2\cdot p^{2n}}
%  \,.
%$$
If $(-d/p)=-1$, then $(2A+B)^2+dB^2$ is not divisible by $p$, and it follows that
then $N_S(\alpha-\xi_0)\geq (1+d)/16\geq 1$ when $d\geq 15$.
If $(-d/p)=0$ then $(2A+B)^2+dB^2$ is divisible by at most one copy of $p$.
As before, we may assume $p$ divides $(2A+B)^2+dB^2$.  In this case,
we find that $2A+B$ is divisible by $p$ and thus $N_S(\alpha-\xi)\geq (p^2+d)/(16p)$.
This gives $N_S(\alpha-\xi_0)\geq 1$ when $d\geq 15$ except when $(d,p)$ equals one of the pairs
$(15,3), (15,5), (35,5), (35,7)$.
We deal with the remaining cases in Section~\ref{S:except}.
\end{proof}

\begin{proof}[Proof of Theorem~\ref{T:3} ($p=2$)]
Let $d>0$ be squarefree and $S=\{2\}$.  Set $\xi_0=(1+w)/3$.
Let $\alpha\in\O_{S}$ be arbitrary and write
$\alpha=(a+bw)/2^n$ where $a,b,n\in\Z$, $n\geq 0$.
We aim to show that $N_S(\alpha-\xi_0)\geq 1$.
Observe that
$$
  \alpha-\xi_0=\frac{(3a-2^n)+(3b-2^n)w}{3\cdot 2^n}
  \,.
$$
We rewrite $3a-2^n=2^m A$ and $3b-2^n=2^m B$ where $A,B\in\Z$ and $m\geq 0$ is maximal;
note that $A,B\neq 0$ and that $A$ and $B$ cannot both be even.

First consider the case where $-d\equiv 2,3\pmod{4}$.
We have
\begin{equation}\label{eqn:1}
  N(\alpha-\xi_0)=\frac{2^{2m}(A^2+dB^2)}{3^2\cdot 2^{2n}}
  \,.
\end{equation}
We claim that $A^2+dB^2$ is divisible by at most one power of $2$;
indeed, the assumption $A^2+dB^2\equiv 0\pmod{4}$ together with $-d\equiv 2,3\pmod{4}$
leads to $A,B\equiv 0\pmod{2}$, a contradiction.
Whence
\begin{equation}
  N_S(\alpha-\xi_0)\geq\frac{A^2+dB^2}{2\cdot 3^2}\geq \frac{1+d}{18}
  \,,
\end{equation}
which implies $N_S(\alpha-\xi_0)\geq 1$ when $d\geq 17$.
Since $\alpha\in\O_S$ was arbitrary, this proves that $K$ is not $S$-norm-Euclidean
when $-d\equiv 2,3\pmod{4}$ and $d\geq 17$.

This leaves open the cases of $d=10,13,14$.
However, if $d$ is even then the assumption that $2$~divides $A^2+dB^2$ leads to $2$ divides $A$
and therefore $N_S(\alpha-\xi_0)\geq (2^2+d)/18\geq 1$ when $d\geq 14$.
We will return to the particular cases of $d=10,13$ later.

%We will deal with $d=14$ now
%and return to the others later.  If $A^2+14B^2$ is not divisible by $2$, then
%$N(\alpha-\xi_0)\geq (1+14)/9\geq 1$ and we are done.  On the other hand,
%if $A^2+14B^2$ is divisible by $2$, then $A$ is even and hence $N(\alpha-\xi_0)\geq (4+14)/18=1$.
%Hence $K$ is not $S$-norm-Euclidean when $d=14$.

Next, consider the case where $-d\equiv 1\pmod {4}$.
We assume, in addition, that $-d\not\equiv 1\pmod{8}$ so that $(1+d)/4$ is an odd integer.
We have
\begin{equation}\label{eqn:newnum}
  N(\alpha-\xi_0)=\frac{2^{2m}(A^2+AB+\left(\frac{1+d}{4}\right)B^2)}{3^2\cdot 2^{2n}}
  \,.
\end{equation}
We claim that $A^2+AB+((1+d)/4)B^2$ is not divisible by $2$;
indeed, if this were the case, then $A^2+AB+B^2\equiv 0\pmod{2}$
which leads to $A,B\equiv 0\pmod{2}$, a contradiction.
Therefore
\begin{equation}\label{eqn:latter}
  N_S(\alpha-\xi_0)\geq\frac{A^2+AB+\left(\frac{1+d}{4}\right)B^2}{3^2}
  =
  \frac{(2A+B)^2+dB^2}{4\cdot 3^2}
  \geq \frac{1+d}{36}
  \,,
\end{equation}
Hence $N_S(\alpha-\xi_0)\geq 1$ when $d\geq 35$.

Now we return to the case of $d=13$.  We choose $\xi_0=w/3$.
In a similar manner as before,
we write
%$$
%  N(\alpha-\xi_0)=\frac{(6a-3\cdot 2^n)^2+13(6b-2\cdot 2^n)^2}{6^2\cdot 2^{2n}}
%$$
$$
  N(\alpha-\xi_0)=\frac{2^{2m}(A^2+13B^2)}{2^{2n}6^2}
  \,,
$$
where $2^mA=3a$, $2^mB=3b-2^n$, $A,B$ not both even,
and $A^2+13B^2$ is divisible by at most one power of $2$.
Notice that $B\neq 0$ but that $A$ could equal $0$.
If $2$ does not divide $A^2+13B^2$, then $N_S(\alpha-\xi_0)\geq 13/9\geq 1$.
Hence we may assume $2$ divides $A^2+13B^2$.  In this case, it follows that $A$ and $B$ are both odd,
and since $A$ is a multiple of $3$, we have $N_S(\alpha-\xi_0)\geq (3^3+13)/18\geq 1$.

It turns out the case of $(d,p)=(10,2)$ will require a little more care.
We return to this case in the next section.
\end{proof}

It may be useful to remark that we have now completely dealt with the ``only if''
portion of Theorem~\ref{T:3}.  The ``if'' portion of the theorem follows from Theorem~\ref{T:1}
except for the five $(d,p)$ pairs which we will now consider.

\section{Exceptional cases}\label{S:except}
Here we deal with the cases where $(d,p)$ is one of the following pairs:
$$(10,2), (15,3), (15,5), (35,5), (35,7)
\,.$$

It turns out that our criterion from \S\ref{S:main}
just barely fails for the cases $d=10$ and $d=15$.
In particular, the countable union of intervals $\bigcup_{j,k}I_j^k$ fail to cover
$[0,1]$ by just finitely many points.  The case of $d=35$ is a little more difficult to deal with.

\begin{proposition}\label{P:1}
$\Q(\sqrt{-10})$ is $\{2\}$-norm-Euclidean.
\end{proposition}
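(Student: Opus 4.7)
The plan is to observe that the covering argument of Lemma~\ref{L:balls} just barely fails for $d=10$: only the rationals $y=1/3$ and $y=2/3$ in $[0,1]$ are left uncovered by $\bigcup_{j,k}I_j^k$. I would then handle those two cases by producing explicit $\alpha\in\O_S$ and bounding $N_S(\xi-\alpha)$ by hand, exploiting that $10=2\cdot 5$ carries exactly one factor of $2$.

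First I would verify that the covering really does handle every $y\in[0,1]\cap\Q$ other than $1/3$ and $2/3$. Since $y$ is rational, its orbit under doubling mod $1$ is finite and forward-invariant. The interval $J:=[\sqrt{3/40},\,1-\sqrt{3/40}]\approx[0.274,\,0.726]$ has length less than $1/2$, and a short iterative argument (split the orbit into its parts in $[0,1/2)$ and $[1/2,1]$ and use doubling-invariance to push each half into successively shrinking subintervals of $J$) forces any orbit contained in $J$ to coincide with the period-two orbit $\{1/3,2/3\}$. Hence for every other rational $y$ some $c\in T$ satisfies $\{cy\}\notin J$, and Corollary~\ref{C:main} gives a suitable $\alpha$.

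For $\xi\in K\cap\mathcal{F}$ with $\pi(\xi)\in\{1/3,2/3\}$, I would reduce via the proof of Lemma~\ref{lem:3} (multiplying by a power of $2$) and complex conjugation (which sends the $v=2/3$ case to the $v=1/3$ case up to translation by $w\in\O_S$) to the normal form $\xi=u+\tfrac{1}{3}w$ with $u=p/q$ in lowest terms, $q$ odd, and $u\in[0,1]$. I would then test the four candidates
\[
\alpha_1=0,\qquad \alpha_2=1,\qquad \alpha_3=w/4,\qquad \alpha_4=1+w/4,
\]
all of which lie in $\O_S$. Writing $\xi-\alpha_i=(P_i+Q_iw)/(c_iz)$ in integer form with $z=3q$ and $c_i\in\{1,4\}$, one checks that whenever $P_i$ is even the numerator $P_i^2+10Q_i^2$ has $v_2$ exactly $1$ (dominated by $10q^2$), while $(c_iz)^2$ has $v_2$ equal to $0$ (for $c_i=1$) or $4$ (for $c_i=4$); translating to $N_S$ yields the sufficient conditions: $\alpha_1$ works iff $p$ is even and $p/q<2\sqrt{2}/3$; $\alpha_2$ works iff $p$ is odd and $p/q>1-2\sqrt{2}/3$; $\alpha_3$ works iff $p/q<1/\sqrt{18}$; and $\alpha_4$ works iff $p/q>1-1/\sqrt{18}$.

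The main obstacle is to check that these four conditions exhaust every $u=p/q\in[0,1]\cap\Q$ with $q$ odd. The argument splits by parity of $p$: for even $p$, the ranges for $\alpha_1$ and $\alpha_4$ give $[0,\,2\sqrt{2}/3)\cup(1-1/\sqrt{18},\,1]=[0,1]$, since $2\sqrt{2}/3>1-1/\sqrt{18}$; for odd $p$, the symmetric pair $\alpha_2,\alpha_3$ gives $(1-2\sqrt{2}/3,\,1]\cup[0,\,1/\sqrt{18})=[0,1]$, since $1-2\sqrt{2}/3<1/\sqrt{18}$. The essential feature that makes this elementary patch succeed is precisely that the prime $2$ divides $d=10$ with multiplicity $1$, giving a single but decisive factor-of-$2$ improvement in $N_S$ over the naive bound of Lemma~\ref{lem:2}.
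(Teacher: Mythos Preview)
Your proof is correct and follows the same two-step structure as the paper: first argue via the doubling map that the intervals $I_j^k$ cover $[0,1]\setminus\{1/3,2/3\}$, then patch the two exceptional heights by testing explicit $\alpha\in\O_S$ and using the extra factor of $2$ coming from $10=2\cdot 5$. The only difference is the patch itself: the paper uses $\alpha\in\{w/2,\ (2+w)/2,\ (2+w)/4\}$ and covers all of $[0,1]$ without any parity split, whereas you take $\alpha\in\{0,\,1,\,w/4,\,1+w/4\}$ and separate the cases $p$ even / $p$ odd --- a minor variation in execution rather than a different approach.
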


\begin{proof}
For the first part of the proof, we will show that
\begin{equation}\label{E:claim}
\bigcup_{j,k}I_j^k \supseteq
[0,1]\setminus\left\{\frac{1}{3},\frac{2}{3}\right\}
\,.
\end{equation}

To establish this, we show that the condition in Corollary~\ref{C:main} holds for all $y\in[0,1]$ except when $y\in\{1/3,2/3\}$.
First, observe that $y\in\{1/3,2/3\}$ implies the fractional part $\{2^ny\}$ belongs to $\{1/3,2/3\}$ for all $n\geq 0$,
and hence the condition clearly fails for these two values of $y$.  Note that $\sqrt{3/40}\approx 0.274<1/3$.

Let $y\in[0,1]$ be arbitrary.  If there exists $n\geq 0$ such that $\{2^ny\}\in[0,1/4]\cup[3/4,1]$, then we are done,
since $1/4<\sqrt{3/40}$.  By way of contradiction, suppose that $\{2^n y\}\in[1/4,1/2]$ for all $n\geq 0$.
We have $y\in J_0:=[1/4,1/2]$ or $y\in J'_0:=[1/2, 3/4]$.  For sake of concreteness, suppose $y\in J_0$, but
the case of $y\in J'_0$ is treated in a similar manner.  Since $y\in [1/4,1/2]$, we have $2y\in[1/2,1]$ but by hypothesis it must be that $2y\in J_1:=[1/2,3/4]$.  Similarly, $4y\in J_2:=[5/4,3/2]$.  We inductively define a sequence of intervals $J_0,J_1,J_2,\dots$ in this manner, all of length $1/4$,
having the property that $2^ny\in J_n$.  Therefore $y\in\bigcap_n 2^{-n}J_n$, and since this intersection contains at most one element,
we get a contradiction, except for one possible value of $y$.  Similarly, if $y\in J'_0$, we can inductively define $J'_0,J'_1,J'_2,\dots$
and obtain a contradiction, except for one possible value of $y$.  Hence we have proven that the condition in Corollary~\ref{C:main}
holds with at most two possible exceptions.  But since we already know $1/3$ and $2/3$ are exceptions, this proves
(\ref{E:claim}).

It suffices to show that given $\xi=r/s+w/3\in\mathcal{F}$ with $s$ odd, there exists $\alpha\in\O_S$ such that $N_S(\xi-\alpha)<1$.
(If $\xi$ is of the form $r/s+2w/3$, then multiply by $2$ and translate back into $\mathcal{F}$ using an element of $\O$.)
Notice that $0\leq r/s\leq 1$ since $\xi\in\mathcal{F}$.  First set $\alpha=w/2$.  We find
$$
  \alpha-\xi_0=\frac{6r+sw}{6s}
  \,,\quad
  N(\alpha-\xi_0)  = \frac{18r^2+5s^2}{2\cdot 3^2 s^2}
$$
Therefore $N_S(\alpha-\xi_0)<2(r/s)^2+5/9<1$ when $r/s<\sqrt{2}/3\approx 0.47$.
If $\alpha=(2+w)/2$, then we find $N_S(\alpha-\xi_0)<2(1-r/s)^2+5/9<1$ when $r/s>1-\sqrt{2}/3\approx 0.53$.
%We have
%$$
%  \alpha-\xi=\frac{3(sa-2r)+sw}{6s}
%$$
%and
%$$
%  N  = \frac{9(sa-2r)^2+10s^2}{2^2 3^2 s^2}
%$$
If $\alpha=(2+w)/4$, then $N_S(\alpha-\xi_0)\leq 4(1/2-r/s)^2+5/9<1$ when $1/6\leq r/s\leq 5/6$.
This completes the proof.
\end{proof}

\begin{proposition}
$\Q(\sqrt{-15})$ is $\{3\}$-norm-Euclidean and $\{5\}$-norm-Euclidean.
\end{proposition}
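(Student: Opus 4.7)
The plan is to follow the template of Proposition~\ref{P:1}: show that Corollary~\ref{C:main} handles everything in $K\cap\mathcal{F}$ except for a single horizontal line, then cover that line by hand.  With $D=15$ we have $\sqrt{3/D}=1/\sqrt{5}$.  Write $p\in\{3,5\}$ for the prime in $S$ and set $A_0=[1/\sqrt{5},\,1-1/\sqrt{5}]\subseteq[0,1]$.

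First I would show that the only $y\in[0,1]\cap\Q$ for which the hypothesis of Corollary~\ref{C:main} fails is $y=1/2$.  Suppose $y$ fails; then $\{p^ky\}\in A_0$ for every $k\geq 0$.  Recursively define $A_{n+1}=A_n\cap\{y:\{py\}\in A_n\}$.  An induction shows that each $A_n$ is a single closed interval $[1/2-\epsilon_n,\,1/2+\epsilon_n]$ centered at $1/2$, with $\epsilon_0=1/2-1/\sqrt{5}$ and $\epsilon_n=\epsilon_0 p^{-n}$.  Indeed, the preimage of $A_n$ under $y\mapsto\{py\}$ consists of $p$ disjoint scaled copies with midpoints $(1/2+j)/p$ for $j=0,\dots,p-1$; only $j=(p-1)/2$ puts the midpoint at $1/2$ (giving the copy $[1/2-\epsilon_n/p,\,1/2+\epsilon_n/p]\subseteq A_n$), and for every other $j$ the midpoint lies at distance at least $1/p$ from $1/2$, which exceeds $\epsilon_n(p+1)/p$ since $\epsilon_0<1/(p+1)$ for $p\in\{3,5\}$.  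Hence those other copies miss $A_n$ entirely, so $A_{n+1}$ equals the centered copy, and $\bigcap_n A_n=\{1/2\}$, forcing $y=1/2$.

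It then remains to cover $\xi=x+w/2$ for $x\in[0,1]\cap\Q$.  Using $N(a+bw)=a^2+ab+4b^2$ one computes $N(\xi-w)=x^2-x/2+1<1$ for $x\in(0,1/2)$ and $N(\xi-1)=(1-x)^2-(1-x)/2+1<1$ for $x\in(1/2,1)$; since $w,1\in\O$ have $\denom_S=1$, Lemma~\ref{lem:2} gives $N_S(\xi-\alpha)\leq N(\xi-\alpha)<1$ throughout these intervals.  This leaves only the two points $\xi=w/2$ and $\xi=(1+w)/2$ (noting that $x=1$ is equivalent mod $\O$ to $x=0$), which I would cover by explicit choices in $\O_S$.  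For $S=\{3\}$, take $\alpha=(-1+2w)/3$ and $\alpha=(2+2w)/3$ respectively; each gives $\xi-\alpha$ equal to $(2-w)/6$ or $(-1-w)/6$, with $N(\xi-\alpha)=1/6=1/(2\cdot 3)$, so $N_S(\xi-\alpha)=1/2<1$.  For $S=\{5\}$, the analogous choices $\alpha=(-1+2w)/5$ and $\alpha=(4+2w)/5$ give $\xi-\alpha$ equal to $(2+w)/10$ or $(-3+w)/10$, with $N(\xi-\alpha)=1/10=1/(2\cdot 5)$, so again $N_S(\xi-\alpha)=1/2<1$.  The main obstacle is the nested-interval induction in the first step; once the exceptional line is isolated, the rest is routine verification.
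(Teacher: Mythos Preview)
Your argument is correct and follows the same two-step template as the paper: a shrinking/nested-interval argument singling out $y=1/2$ as the unique failure of Corollary~\ref{C:main}, then covering the line $x+w/2$ with $\alpha=w$ and $\alpha=1$ and dispatching the two leftover points $w/2$ and $(1+w)/2$ explicitly. The only real difference is at those endpoints, where you take $\alpha\in\O_S$ with denominator $p$ instead of the paper's $\alpha\in\O$; your choices check out, and in fact your treatment of $\xi=w/2$ is the cleaner one, since $N(w/2)=1$ rather than the $15/16$ the paper asserts.
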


\begin{proof}
Suppose $S=\{p\}$ with $p=3$ or $p=5$.
We proceed in a similar manner as in Proposition~\ref{P:1}.
First we claim that $\bigcup_{j,k}I_j^k \supseteq[0,1]\setminus\left\{1/2\right\}$.
For sake of concreteness, we prove this when $p=3$.
Let $y\in[0,1]$ be arbitrary.  Suppose $\{3^n y\}\in[1/3,2/3]$ for all $n\geq 0$.
(If this condition does not hold, then we are done since $1/3<\sqrt{3/15}$.)
This means that $y\in J_0:=[1/3,2/3]$, $3y\in J_1:=[4/3,5/3]$, $9y\in J_2:=[13/3,14/3]$, and so on.
Inductively, we find
$$
3^n y\in J_n:=\left[\frac{3^n}{2}-\frac{1}{6}, \frac{3^n}{2}+\frac{1}{6}\right]\,.
$$
Therefore
$y\in \bigcap_n 3^{-n}J_n =\{1/2\}$.  This proves the claim when $p=3$.
When $p=5$, the proof is similar.  Noting that $2/5<\sqrt{3/15}$,
we can assume that $\{5^n y\}\in[2/5,3/5]$ for all $n\geq 0$.  This leads to
$J_n:=[5^n/2-1/10, 5^n/2+1/10]$ and the result follows as before.

Let $\xi=r/s+w/2\in\mathcal{F}$ with $(p,s)=1$.
We may assume $0\leq r/s<1$.
If $\alpha=w$, then $N_S(\xi-\alpha)\leq(r/s-1/4)^2+15/16<1$ when $0<r/s<1/2$.
Similarly, if $\alpha=1$, then $N_S(\xi-\alpha)\leq (r/s-3/4)^2+15/16<1$ when $1/2<r/s<1$.
This leaves only the points $\xi=w/2$ and $\xi=(1+w)/2$.
When $\xi=w/2$, we compute $N(\xi)=15/16$ so that $N_S(\xi)<1$ in either case.
Consider $\xi=(1+w)/2$.
We have $N(\xi)=3/2$ so that $N_S(\xi)=1/2<1$ in the case $p=3$.
For the case $p=5$, we compute $N_S(\xi-2)=5/2$ so that $N_S(\xi-2)=1/2<1$.
This completes the proof.
\end{proof}

It appears that the strategy employed for the previous two propositions
does not work when $d=35$.
Indeed, although it turns out that $K$ is $S$-norm-Euclidean
in the remaining exceptional cases,
Lemma~\ref{lem:2} may not be sufficient to prove this.
We require a new idea.  The following lemma essentially says
that in some cases, we can increase the radii of our circles.

\begin{lemma}\label{lem:boost}
Notation as in Lemma~\ref{lem:2}.
Assume $p\in S$ and $2\not\in S$.
Assume $-d\equiv 1\pmod{4}$ and $p\mid d$.
If the following holds,
\begin{equation}\label{eqn:cong}
c\equiv0\pmod{p}\,,\;\;
b\not\equiv 0\pmod{p}\,,\;\;
2a+b\equiv 0\pmod{p}
\end{equation}
then
$$
  N_S(\xi-\alpha)\leq \frac{c^2}{p}N(\xi-\alpha)
  \,.
$$
In particular, in Lemma~\ref{lem:3}, it suffices to find a value of $\alpha$
satisfying (\ref{eqn:cong}) such that
$$|\xi-\alpha|<\frac{\sqrt{p}}{c}\,.$$
\end{lemma}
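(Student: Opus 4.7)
The plan is to imitate the proof of Lemma~\ref{lem:2} and to sharpen its conclusion by exhibiting an extra factor of $p$ in the algebraic integer $cz\xi-cz\alpha$. Once that is done, the $1/p$ improvement is automatic: $p\in S$, so that extra factor can be removed from the numerator when passing from $N$ to $N_S$, while the denominator $z^2c^2$ only loses $c^2$ as before.

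To produce the extra factor, I would write $cz\xi-cz\alpha=A+Bw$ with $A=cx-za$, $B=cy-zb$, and reduce modulo~$p$. Because $c\equiv 0\pmod{p}$ and $\gcd(z,p)=1$ (since $p\in S$ and $z$ has no prime factors in $S$), this gives $A\equiv -za$ and $B\equiv -zb\pmod{p}$, so the hypothesis $2a+b\equiv 0\pmod{p}$ forces $2A+B\equiv -z(2a+b)\equiv 0\pmod{p}$. Since $-d\equiv 1\pmod{4}$ and $p$ is odd (as $2\not\in S$ while $p\in S$), the identity from \S\ref{S:main} reads
$$N(A+Bw)=\frac{(2A+B)^2+dB^2}{4},$$
in which both $(2A+B)^2$ and $dB^2$ are divisible by $p$ (the latter because $p\mid d$); dividing by $4$ preserves $p$-divisibility since $p$ is odd. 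Substituting into the identity $N(\xi-\alpha)=N(cz\xi-cz\alpha)/(z^2c^2)$ from the proof of Lemma~\ref{lem:2} yields
$$N_S(\xi-\alpha)\leq\frac{N(cz\xi-cz\alpha)/p}{z^2}=\frac{c^2}{p}N(\xi-\alpha),$$
as claimed.

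For the ``in particular'' claim, using $N(\xi-\alpha)=|\xi-\alpha|^2$ together with $|\xi-\alpha|<\sqrt{p}/c$ gives $N_S(\xi-\alpha)<(c^2/p)(p/c^2)=1$, so this $\alpha$ fulfills the sufficient condition of Lemma~\ref{lem:3}. I do not expect any genuine obstacle here; the only point worth noting is that the hypothesis $b\not\equiv 0\pmod{p}$ plays no role in the divisibility computation above, but rules out the degenerate situation in which $\alpha$ already has a representation with denominator $c/p$ (in which case Lemma~\ref{lem:2} applied to that simpler representation already supplies the stronger bound $c^2/p^2$, making the boost unnecessary).
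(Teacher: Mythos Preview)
Your proof is correct and follows essentially the same approach as the paper's: write $\xi-\alpha=(A+Bw)/(cz)$ with $A=cx-az$, $B=cy-bz$, observe that the congruence hypotheses force $2A+B\equiv 0\pmod{p}$, and hence the numerator $(2A+B)^2+dB^2$ picks up an extra factor of $p$ that can be stripped away when passing to $N_S$. Your added remarks---that $p$ is odd (so division by $4$ is harmless modulo $p$) and that the unused hypothesis $b\not\equiv 0\pmod p$ merely excludes the degenerate case where $\alpha$ already has denominator $c/p$---are correct and make the argument slightly more explicit than the paper's.
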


\begin{proof}
We have
%$\xi-\alpha=\frac{A+Bw}{cz}$
$\xi-\alpha=(A+Bw)/(cz)$
where $A=cx-az$ and $B=cy-bz$
so that
$$
  N(\xi-\alpha)=\frac{(2A+B)^2+dB^2}{4c^2z^2}
  \,.
$$
As in the proof of Lemma~\ref{lem:2} we have
$$
  N_S(\xi-\alpha)\leq \frac{(2A+B)^2+dB^2}{4z^2}=c^2 N(\xi-\alpha)
$$
If in addition, the conditions (\ref{eqn:cong}) hold, then
$2A+B\equiv 0\pmod{p}$ which implies the numerator is divisible by $p$,
and therefore
$N_S(\xi-\alpha)\leq p^{-1}c^2 N(\xi-\alpha)$.
\end{proof}

\begin{proposition}
$\Q(\sqrt{-35})$ is $\{5\}$-norm-Euclidean and $\{7\}$-norm-Euclidean.
\end{proposition}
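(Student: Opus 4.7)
My plan is to apply Lemma~\ref{lem:boost}, whose hypotheses are met since $-35\equiv 1\pmod 4$ and both $5$ and $7$ divide $d=35$. For each $\xi\in K\cap\mathcal{F}$ I want to exhibit $\alpha=(a+bw)/c\in\O_S$ with $c\in\T$ a power of $p$, $p\nmid b$, and $2a+b\equiv 0\pmod p$, satisfying $|\xi-\alpha|<\sqrt{p}/c$; by the concluding statement of Lemma~\ref{lem:boost} this will force $N_S(\xi-\alpha)<1$.

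First I would work at level $c=p$. The admissible centers in each fundamental domain are the $p-1$ points $(a+bw)/p$ with $b\in\{1,\ldots,p-1\}$ and $a\equiv -b/2\pmod p$---for $p=5$, namely $(2+w)/5$, $(4+2w)/5$, $(1+3w)/5$, $(3+4w)/5$, and for $p=7$, six analogous centers---each supporting a boost disc of radius $1/\sqrt p$ rather than the standard $1/p$. Next I would verify that these boost discs, together with the unit discs centered at the four integers $\{0,1,w,1+w\}\subseteq\O\cap\mathcal{F}$ and the adjacent $\O$-translates that spill into $\mathcal{F}$, cover $\mathcal{F}$ entirely. If residual gaps remain, I would close them using boost discs at level $c=p^2$, whose $p^2(p-1)$ centers are denser and each carry radius $1/(p\sqrt p)$.

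The hard part will be this geometric verification. Unlike in Proposition~\ref{P:1} and the $d=15$ case, the set of projected $y$-values uncovered by Lemma~\ref{L:balls} is infinite for $d=35$ (it is a Cantor-like set of base-$p$ periodic rationals whose orbits under $y\mapsto py$ stay in $[\sqrt{3/35},\,1-\sqrt{3/35}]$), so I cannot list finitely many exceptional $\xi$ and dispatch each by hand. Rather, the congruence $2a+b\equiv 0\pmod p$ is equivalent to $a+bw$ lying in the ramified prime $\mathfrak p$ above $p$ (note $\O/\mathfrak p\cong\mathbb{F}_p$ via $w\mapsto 1/2$), and the covering should follow by a direct case analysis over the $p$ residue classes of the numerator of $\xi$ modulo $\mathfrak p$: for each such class there is a distinguished $\alpha$ at level $c\in\{1,p,p^2\}$ within the required distance, and it remains to check these $p$ cases geometrically.
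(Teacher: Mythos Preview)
Your identification of Lemma~\ref{lem:boost} as the key new ingredient is exactly right, and the paper likewise finishes by exhibiting a finite collection of discs covering $\mathcal{F}$ and checking this by picture (backed by a computer subdivision). The substantive difference is that the paper does \emph{not} restrict to boost discs: for $p=5$ it uses fourteen discs, namely the four unit discs at $0,1,w,1+w$, six boost discs of radius $\sqrt{5}/5$, and four \emph{ordinary} discs of radius $1/5$ centred at $(1+2w)/5$, $(2+2w)/5$, $(3+3w)/5$, $(4+3w)/5$ coming from Lemma~\ref{lem:3} rather than Lemma~\ref{lem:boost}; for $p=7$ it uses twenty discs with an analogous $4+8+8$ split.

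Your decision to work only with boost discs is where the plan breaks. Along a single row $y=b/p$ the admissible boost centres are spaced $1$ apart while each disc has radius $1/\sqrt{p}<1/2$, so every row leaves a gap; for $p=5$ on the row $y=2/5$ the interval of parallelogram $x$-coordinates $(1/\sqrt5-1/5,\;4/5-1/\sqrt5)\approx(0.247,0.353)$ is missed by the $b=2$ boost discs, and no other row reaches this height since $\sqrt{35}/10>1/\sqrt5$. Passing to $c=p^{2}$ does not rescue you: a boost centre at level $5^{k}$ with $k\ge 2$ can never sit at height $y=2/5$ (that would force $5\mid b$), and the nearest rows $b=9,11$ at level $25$ lie at imaginary distance $\sqrt{35}/50>\sqrt5/25$ from that height. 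Consequently the finite family you propose at levels $c\in\{1,p,p^{2}\}$ leaves an open two-dimensional region around $(3+4w)/10$ uncovered, and that region contains plenty of $\xi\in K$ with $5$-free denominator, so your covering argument cannot conclude. The residue-class heuristic in your last paragraph might be developable, but as written it is a hope rather than a proof; the paper sidesteps the whole issue by simply throwing in the handful of small ordinary discs that plug these gaps.
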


\begin{proof}
Set $S=\{p\}$.
We first consider $p=5$.
Making use of Lemmas~\ref{lem:3} and~\ref{lem:boost} we obtain a list of $14$
circles of various radii that suffice to cover $\mathcal{F}$.  See Table~\ref{table:A}
for the centers of the chosen circles (as elements $\alpha\in\O_S$) and the radius afforded 
by our lemmas.
\begin{table}
\begin{tabular}{c|c}
$r$ & $\alpha$\\
\hline\\[0ex]
$1$ & $0$, $1$, $w$, $1+w$\\[2ex]
\hline\\
$1/5$ & $\displaystyle\frac{1+2w}{5}$, $\displaystyle\frac{2+2w}{5}$, $\displaystyle\frac{3+3w}{5}$, $\displaystyle\frac{4+3w}{5}$\\[2ex]
\hline\\
$\sqrt{5}/5$ & $\displaystyle\frac{2+w}{5}$, $\displaystyle\frac{-1+2w}{5}$, $\displaystyle\frac{6+3w}{5}$,
$\displaystyle\frac{3+4w}{5}$, $\displaystyle\frac{4+2w}{5}$, $\displaystyle\frac{1+3w}{5}$\\
\end{tabular}
\caption{Circles $\{|\xi-\alpha|<r\}$ that cover $\mathcal{F}$ when $p=5$}
\label{table:A}
\end{table}
In principle, one could verify by hand that the $14$ provided circles cover $\mathcal{F}$.
We believe the picture in Figure~\ref{fig3} that displays the covering is fairly convincing.
If one is not convinced by the picture, one could partition the fundamental domain into $5^6$
parallelograms of equal area in the obvious way, and use a computer to check
that each parallelogram lies entirely inside one of the given circles.
When $p=7$, the argument is similar.  We give a list of $20$ circles (see Table~\ref{table:B})
that carry out the covering (also depicted in Figure~\ref{fig3}).
\begin{table}
\begin{tabular}{c|c}
$r$ & $\alpha$\\
\hline\\
$1$ & $0$, $1$, $w$, $1+w$\\[2ex]
\hline\\
$1/7$ & $\displaystyle\frac{3+2w}{7}$, $\displaystyle\frac{5+3w}{7}$, $\displaystyle\frac{6+3w}{7}$, $\displaystyle\frac{7+3w}{7}$, $\displaystyle\frac{4w}{7}$, $\displaystyle\frac{1+4w}{7}$, $\displaystyle\frac{2+4w}{7}$, $\displaystyle\frac{5+5w}{7}$\\[2ex]
\hline\\
$\sqrt{7}/7$ & $\displaystyle\frac{3+w}{7}$, $\displaystyle\frac{-1+2w}{7}$, $\displaystyle\frac{6+2w}{7}$,
$\displaystyle\frac{2+3w}{7}$, $\displaystyle\frac{5+4w}{7}$, $\displaystyle\frac{1+5w}{7}$,
$\displaystyle\frac{8+5w}{7}$, $\displaystyle\frac{4+6w}{7}$\\
\end{tabular}
\caption{Circles $\{|\xi-\alpha|<r\}$ that cover $\mathcal{F}$ when $p=7$}
\label{table:B}
\end{table}
\begin{figure}[h!]
  \centering
  \begin{subfigure}[b]{0.4\linewidth}
    \includegraphics[width=\linewidth]{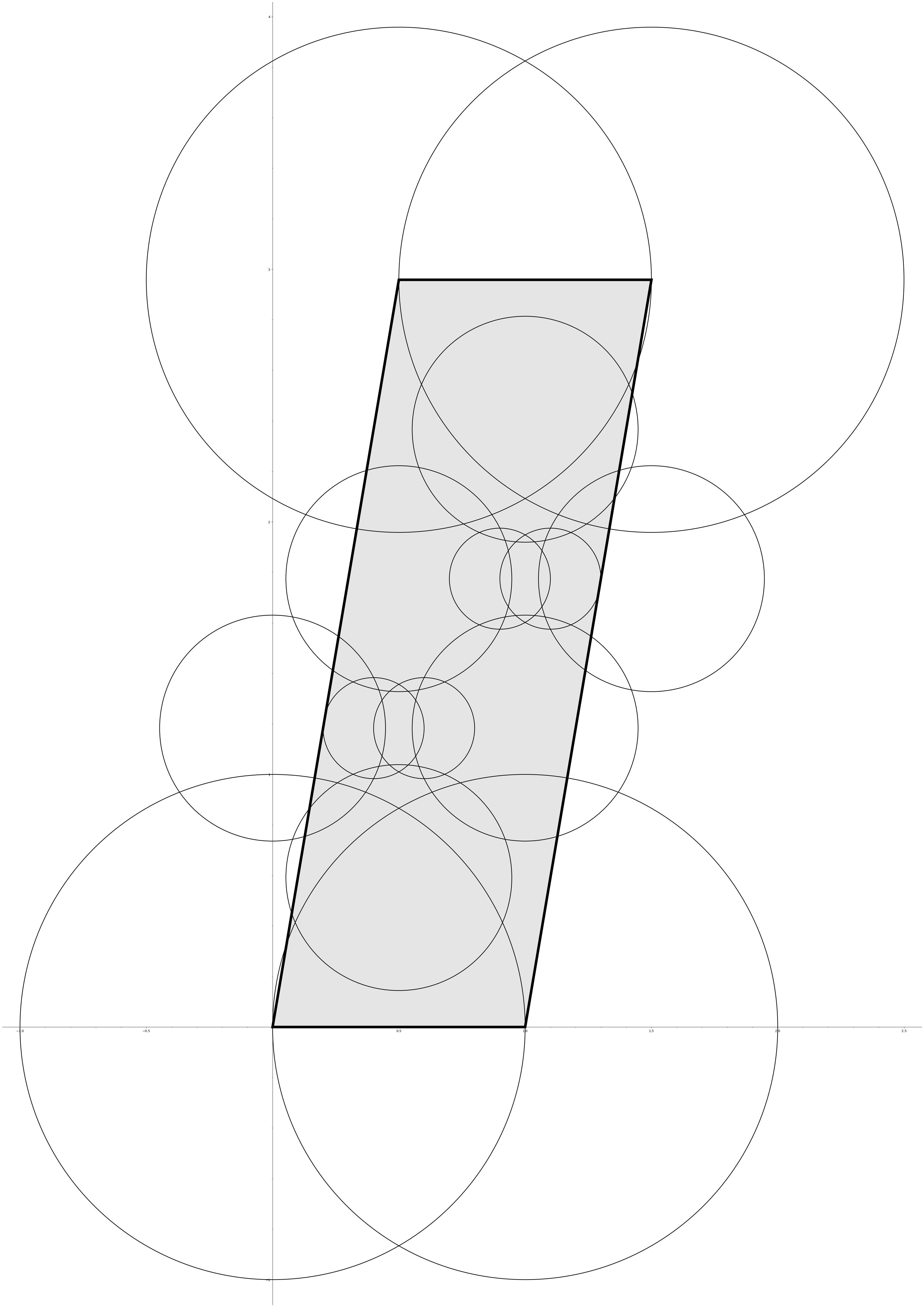}
    \caption{Covering of $\mathcal{F}$ when $p=5$}
  \end{subfigure}
  \begin{subfigure}[b]{0.4\linewidth}
    \includegraphics[width=\linewidth]{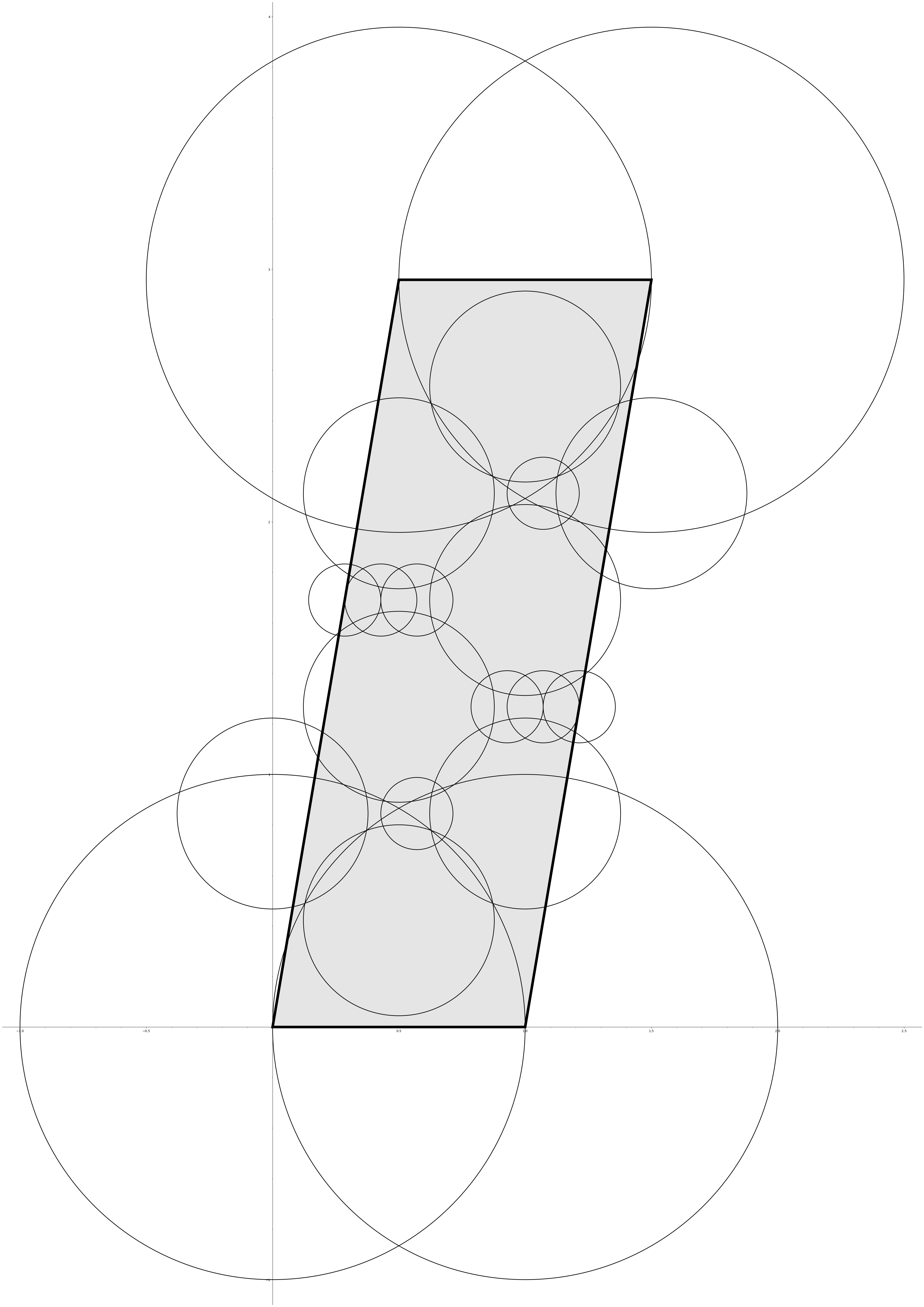}
    \caption{Covering of $\mathcal{F}$ when $p=7$}
  \end{subfigure}
  \caption{$K=\Q(\sqrt{-35})$}
      \label{fig3}
\end{figure}
\end{proof}

%%%%%%%%%%%%%%%%%%%%%%%%%%%%%%%%%%%%%%%%%%%%%%%%%%%%%%%%%%%%%%%

%\section{Action of the units}\label{S:new}
%
%In this section, we show how the action of the units can be used to augment our method.
%As an illustration, we will show that $K$ is $S$-norm-Euclidean with $S=\{2\}$ when $d=10$ and $d=127$.
%We chose $d=10$ because it completes the proof of Theorem~\ref{T:3}, and $d=127$ because it
%provides a new example that does not follow from the methods in~\cite{VDL}.
%
%The reader unfamiliar with algebraic number theory may have wondered if there is another interpretation of the congruence mod $8$ or the condition on $(-d/p)$ appearing in Theorem~\ref{T:3}.
%When $p$ is an odd prime, the ideal $(p)$ splits into two distinct prime ideals in $\O$ if and only if $(-d/p)=1$;
%moreover, the unit group $\O_S^\times$ modulo its torsion subgroup is isomorphic to $\Z^2$ when $(-d/p)=1$ and $\Z$ otherwise.  The congruence mod $8$ plays an analogous role for the prime $2$.
%
%\textcolor{blue}{I still have to finish this, but here are the pictures before we apply the action of the units.
%I don't fully know if the case $d=-127$ will pan out or not, but this is the goal.
%}
%
%\begin{figure}[h!]
%  \centering
%  \begin{subfigure}[b]{0.3\linewidth}
%    \includegraphics[width=\linewidth]{pic_10.png}
%    \caption{$\Q(\sqrt{-10})$}
%  \end{subfigure}
%  \begin{subfigure}[b]{0.3\linewidth}
%    \includegraphics[width=\linewidth]{pic_127.png}
%    \caption{$\Q(\sqrt{-127})$}
%  \end{subfigure}
%\end{figure}

%%%%%%%%%%%%%%%%%%%%%%%%%%%%%%%%%%%%%%%%%%%%%%%%%%%%%%%%%%%%%%%

\section{Appendix}\label{S:appendix}

We finish with some discussion that requires a modicum of algebraic number theory.
Let $K$ be a number field with ring of integers $\O$, and let $N$ denote the absolute value of the norm map.
In the quadratic setting, it is known that there are only finitely many norm-Euclidean fields.
As we have seen, when $K$ is complex quadratic, this result is classical.
When $K$ is real quadratic,
the classification was completed by Chatland and Davenport in 1950 (see~\cite{CD}).
It turns out that $K=\Q(\sqrt{d})$ with $d>0$ squarefree is norm-Euclidean if and only if
$d=2, 3, 5, 6, 7, 11, 13, 17, 19, 21, 29, 33, 37, 41, 57, 73$.
Barnes and Swinnerton-Dyer corrected an error from earlier work (see~\cite{BS});
for a time, it was thought that the field $\Q(\sqrt{97})$ was norm-Euclidean.
See~\cite{ELS} for a self-contained exposition of this result.
See~\cite{LENSTRA1,LENSTRA23} for an introduction to the topic of Euclidean number fields,
or~\cite{LEMMERMEYER} for a survey of the field.

Recall that every number field $K$ has $n=[K:\Q]$ embeddings $\sigma:K\to\C$.
We have $n=r_1+2r_2$ where $r_1$ is the number of real embeddings and $2r_2$ is the number
of conjugate pairs of complex embeddings.  We say that unit group $\O^\times$ has rank $r$,
and write $\rk(\O^\times)=r$, if $\O^\times$ modulo its torsion subgroup is isomorphic
to $\Z^r$.  Dirichlet's Unit Theorem states that $\rk(\O^\times)=r_1+r_2-1$.
Davenport generalized the result of~\cite{CD} to show that there are only finitely many norm-Euclidean fields
with $\rk(\O^\times)=1$.
Note that $\rk(\O^\times)=1$ iff $(r_1,r_2)=(2,0),(1,1),(0,2)$.  Very little is known 
about norm-Euclidean fields when $\rk(\O^\times)>1$.

We will now move to the $S$-integral setting,
but first we introduce a little more terminology and notation.
(One possible reference for this material is~\cite{lang}.)
Recall that associated to every prime ideal $\mathfrak{p}$ is a valuation
$v:K\setminus\{0\}\to\Z$, where $v_\mathfrak{p}(\alpha)$ is the power of $\mathfrak{p}$
in the (unique) factorization of the fractional ideal $(\alpha)$ into prime ideals of $\O$.
We refer to equivalence classes of embeddings up to complex conjugation as infinite primes.
The set of all infinite primes is denoted by $S_\infty$; notice that $\#S_\infty=r_1+r_2$
and hence $\rk(\O^\times)=\#S_\infty-1$.
By a prime of $K$ we will mean either a finite prime (which is a prime ideal $\mathfrak{p}$)
or an infinite prime (which is an embedding $\sigma$).  
We can associate to every prime $v$ of $K$ (finite or infinite) an absolute value $|\cdot|_v:K\to\R_{\geq 0}$.
If the prime $v$ is finite (associated to an ideal $\mathfrak{p}$) then $|\alpha|_v=(N\mathfrak{p})^{-v_\mathfrak{p}(\alpha)}$.
If the prime $v$ is infinite (associated to an embedding $\sigma$) then $|\alpha|_v=|\sigma(\alpha)|$
or $|\alpha|_v=|\sigma(\alpha)|^2$, depending upon whether $\sigma$ is real or complex.
This normalization is chosen so that $N(\alpha)=\prod_{v\in S_\infty}|\alpha|_v$.

We give the usual definition of the ring of $S$-integers.
Let $S$ be a finite set of primes of $K$ containing $S_\infty$.
We define the $S$-integers of $K$ as
$$
\O_S=\{\xi\in K\mid v(\xi)\geq 0\text{ for all }v\notin S\}
\,.
$$
It should be noted that passing from $\mathcal{O}$ to $\mathcal{O}_{S}$ has the effect of inverting all the prime ideals in $S$.
The $S$-norm of a number $\xi\in K$ is defined as $N_S(\xi)=\prod_{v\in S}|\xi|_v$.
%and the $S$-norm of an ideal $\mathfrak{a}\subseteq\O_S$ is defined as $N_S(\mathfrak{a})=|\O_S/\mathfrak{a}|$.
%Here $|\cdot|_v$ denotes the $v$-adic absolute value, and at the infinite primes we have chosen
%the standard normalization so that $N_{S_\infty}=N$.

A theorem of O'Meara (see~\cite{OMEARA}) says that that for any number field $K$ there exists a finite set of primes $S$ such that $K$ is $S$-norm-Euclidean.  It would be desirable to have a quantitative form of this theorem that holds for any number field.
In principle, the techniques developed by Lenstra in~\cite{LENSTRA} would allow one to prove a quantitative version of
this theorem (see the last paragraph of Section 1 of~\cite{LENSTRA}).
However, to our knowledge this has not been carried out anywhere in the literature.
On the other hand, O'Meara's Theorem is also proved in~\cite{MARKANDA};
in principle this approach could be made quantitative as well, but the author only explicitly states such a result
in the complex quadratic case.

The determination of all $S$-norm-Euclidean number fields (and function fields) with $\#S=2$ was completed by van der Linden (see~\cite{VDL}).  (The case of $\#S=1$ is an exercise.)  The analogue of Dirichlet's Unit Theorem in the $S$-integral setting
says that
$\rk(\O_S^\times)=\#S-1$ and consequently, $\#S=2$ is precisely the
situation where $\O_S^\times$ has rank one.  As indicated before, not much is known when $\rk(\O_S^\times)>1$.

We now explain how the setting in which we have been working throughout the paper fits into this picture.
Let $S$ be a finite set of rational primes as in~\S\ref{S:intro}.
Let $S_K$ denote the set all prime ideals of $K$ lying above primes in $S$
together with the set of all infinite primes $S_\infty$.
In this setting one could write $\mathcal{O}_{S}$ instead of the more cumbersome but accurate $\mathcal{O}_{S_K}$,
and $N_S$ instead of $N_{S_K}$.  It turns out that $\O_S$ is then the same ring we introduced in~\S\ref{S:intro},
and $N_S$ is the same function.  We leave these claims as exercises to the reader.
It can now be seen that our proof of Theorem~\ref{T:2} is an elementary proof of a quantitative
version of O'Meara's Theorem in the complex quadratic case.

Let $K$ be a complex quadratic field with notation as in~\S\ref{S:intro}.
Let $p$ be a rational prime.  Recall that one calls $p$ split, inert, or ramified in $K$ depending upon whether the
factorization of $(p)$ into prime ideals looks like $(p)=\mathfrak{p}\overline{\mathfrak{p}}$, $(p)=\mathfrak{p}$, $(p)=\mathfrak{p}^2$,
respectively.  When $p$ is odd this can be detected by observing whether the Legendre symbol $(-d/p)$
is equal to $1$, $-1$ or $0$; when $p=2$, this can de detected by whether $-d\pmod{8}$
belongs to $\{1\}$, $\{5\}$, or $\{2,3,6,7\}$.  As in Theorem~\ref{T:3}, consider $S=\{p\}$.
Then $\#S_K=2$ when $p$ is inert or ramified, and $\#S_K=3$ if $p$ splits.
In other words, the conditions in Theorem~\ref{T:3} were secretly insisting that $\rk(\O_S^\times)=1$.
It follows that our Theorem~\ref{T:3} is subsumed by van der Linden's classification.
We still believe it is worthwhile to put down as our proof is elementary in nature.
On the other hand, Theorem~\ref{T:1} contains a number of examples in which $\rk(\O_S^\times)>1$.

We now give the connection with the class group of $K$, denoted by $\Cl(K)$.
We write $I(K)$ to denote the multiplicative group of nonzero fractional ideals
and $P(K)$ denote the subgroup of principal fractional ideals, so that
$\Cl(K)=I(K)/P(K)$.
It is clear that $\Cl(K)$ is trivial if and only if $\O$ is a PID; in this setting,
this is equivalent to $\O$ being a UFD.
Suppose  $S$ be a set of primes in $K$ containing $S_\infty$.
We can analogously define the $S$-class group $\Cl_S(K)=I_S(K)/P_S(K)$.
(We note that $\O_S$, like $\O$, is a Dedekind domain.)  It turns out that $\Cl_S(K)$ is isomorphic
to $\Cl(K)$ modulo the subgroup generated by the finite primes in $S$.
In particular, if $K$ is $S$-norm-Euclidean, then $\Cl_S(K)$ is trivial, and hence
the primes (or even just the non-principal primes) in $S$  generate $\Cl(K)$.
This explains the comment immediately following Theorem~\ref{T:2};
indeed, in the complex quadratic setting, the Minkowski constant is $2/\pi>1/\sqrt{3}$.

Finally, consider the situation of Theorem~\ref{T:3} and the remark that follows.
Let $K$ be a complex quadratic field and $S=\{p\}$.  If $p$ is inert,
then $\Cl(K)\simeq\Cl_S(K)$ and hence $K$ being $S$-norm-Euclidean implies
$\Cl(K)$ is trivial.  A famous thoerem of Baker--Heegner--Stark
states that this only happens for $d=1,2,3,7,11,19,43,67,163$.
If $p$ is ramified then $(p)=\mathfrak{p}^2$ and $\Cl_S(K)$ is isomorphic
to $\Cl(K)/\langle\mathfrak{p}\rangle$ where $\mathfrak{p}^2$ is trivial in $\Cl(K)$;
consequently, $K$ being $S$-norm-Euclidean
implies that $\Cl(K)$ has order $1$ or $2$.
Similarly, complex quadratic fields with class number $2$ have been classified,
so the list of possibilities is finite.
(Keep in mind that the solutions to these class number problems
are deep results and that our proof of Theorem~\ref{T:3} did not make use of these results
in any way.)
On the other hand, if $p$ is split then $(p)=\mathfrak{p}\overline{\mathfrak{p}}$,
and since $\mathfrak{p}$, $\overline{\mathfrak{p}}$ are inverses in $\Cl(K)$,
it follows again that $\Cl_S(K)\simeq\Cl(K)/\langle\mathfrak{p}\rangle$;
consequently, $K$ being $S$-norm-Euclidean
implies the class group is cyclic generated by $\mathfrak{p}$.
This event should happen infinitely often, but currently this is unproven.
This explains the discussion following Theorem~\ref{T:3}.

%The following result gives a simple description of $\O_{S}$.
%\begin{lemma}\label{lem:1}
%We have
%$$\O_{S}=\left\{\frac{a+bw}{c} \, \mid  a,b \in \mathbb{Z},\, c\in\T \right\}\,. $$
%\end{lemma}
%\begin{proof}
%Let $\xi=(a+bw)/c$ with $a,b\in\Z$ and $c\in\T$.
%We have $v_\mathfrak{p}(\xi)=v_\mathfrak{p}(a+bw)\geq 0$ for all $\mathfrak{p}\in S_K$
%and therefore $\xi\in\O_{S}$.
%
%Conversely suppose $\xi\in\O_{S}$.
%We can write $\xi=(a+bw)/c$ with $a,b,c\in\Z$ and $(a,b,c)=1$ just by virtue of the fact that
%$\xi\in K$.
%By way of contradiction, assume $p\mid c$ with $p \not\in S$.  It then follows that $v_\mathfrak{p}(a+bw)\geq v_\mathfrak{p}(c)$ for every $\mathfrak{p}$ dividing $(p)$
%and hence $p$ divides $a+bw$ in $\O_K$.  But this implies
%$p|a$ and $p|b$ which contradicts $(a,b,c) = 1$.
%Therefore the rational prime factorization of $c$ must be a product of primes in $S$,
%which completes the proof.
%\end{proof}

\section*{Acknowledgment}
%\begin{acknowledgment}{Acknowledgment.}
This research was completed as part of the Research Experience for Undergraduates and Teachers program at California State University, Chico funded by the National Science Foundation (DMS-1559788).
%\end{acknowledgment}

\bibliographystyle{plain}
%\nocite{*}
\bibliography{mybib}

\begin{thebibliography}{10}

\bibitem{BS}
E.~S. Barnes and H.~P.~F. Swinnerton-Dyer.
\newblock The inhomogeneous minima of binary quadratic forms. {I}.
\newblock {\em Acta Math.}, 87:259--323, 1952.

\bibitem{CD}
H.~Chatland and H.~Davenport.
\newblock Euclid's algorithm in real quadratic fields.
\newblock {\em Canad. J. Math.}, 2:289--296, 1950.

\bibitem{CL}
H.~Cohen and H.~W. Lenstra, Jr.
\newblock Heuristics on class groups.
\newblock In {\em Number theory ({N}ew {Y}ork, 1982)}, volume 1052 of {\em
  Lecture Notes in Math.}, pages 26--36. Springer, Berlin, 1984.

\bibitem{DD}
P.~G.~Lejeune Dirichlet.
\newblock {\em Vorlesungen \"{u}ber {Z}ahlentheorie}.
\newblock Herausgegeben und mit Zus\"{a}tzen versehen von R. Dedekind. Vierte,
  umgearbeitete und vermehrte Auflage. Chelsea Publishing Co., New York, 1968.

\bibitem{ELS}
R.~B. Eggleton, C.~B. Lacampagne, and J.~L. Selfridge.
\newblock Euclidean quadratic fields.
\newblock {\em Amer. Math. Monthly}, 99(9):829--837, 1992.

\bibitem{lang}
Serge Lang.
\newblock {\em Algebraic Number Theory}.
\newblock Applied Mathematical Sciences. Springer, 1994.

\bibitem{LEMMERMEYER}
Franz Lemmermeyer.
\newblock The {E}uclidean algorithm in algebraic number fields.
\newblock {\em Exposition. Math.}, 13(5):385--416, 1995.

\bibitem{LENSTRA}
H.~W. Lenstra, Jr.
\newblock Euclidean number fields of large degree.
\newblock {\em Invent. Math.}, 38(3):237--254, 1976/77.

\bibitem{LENSTRA1}
Hendrik~W. Lenstra, Jr.
\newblock Euclidean number fields. {I}.
\newblock {\em Math. Intelligencer}, 2(1):6--15, 1979/80.

\bibitem{LENSTRA23}
Hendrik~W. Lenstra, Jr.
\newblock Euclidean number fields. {II}, {III}.
\newblock {\em Math. Intelligencer}, 2(2):73--77, 99--103, 1979/80.
\newblock Translated from the Dutch by A. J. Van der Poorten.

\bibitem{MARKANDA}
Raj Markanda.
\newblock Euclidean rings of algebraic numbers and functions.
\newblock {\em J. Algebra}, 37(3):425--446, 1975.

\bibitem{OMEARA}
O.~T. O'Meara.
\newblock On the finite generation of linear groups over {H}asse domains.
\newblock {\em J. Reine Angew. Math.}, 217:79--108, 1965.

\bibitem{STARK}
H.~M. Stark.
\newblock The {G}auss class-number problems.
\newblock In {\em Analytic number theory}, volume~7 of {\em Clay Math. Proc.},
  pages 247--256. Amer. Math. Soc., Providence, RI, 2007.

\bibitem{VDL}
F.~J. van~der Linden.
\newblock {\em Euclidean rings with two infinite primes}, volume~15 of {\em CWI
  Tract}.
\newblock Stichting Mathematisch Centrum, Centrum voor Wiskunde en Informatica,
  Amsterdam, 1985.

\end{thebibliography}

\end{document}